\def\frk{\frak}               
\def\aa{{\frk a}}
\def\qq{{\frk q}}
\def\mm{{\frk m}}
\def\Phi{{\frk n}}
\def\Phi{{\frk N}}
\def\opn#1#2{\def#1{\operatorname{#2}}} 
\opn\chara{char} \opn\length{\ell} \opn\pd{pd} \opn\rk{rk}
\opn\projdim{proj\,dim} \opn\injdim{inj\,dim} \opn\rank{rank}
\opn\depth{depth} \opn\sdepth{sdepth} \opn\fdepth{fdepth}
\opn\grade{grade} \opn\height{height} \opn\embdim{emb\,dim}
\opn\codim{codim}  \opn\min{min} \opn\max{max}
\opn\Tr{Tr} \opn\bigrank{big\,rank}
\opn\superheight{superheight}\opn\lcm{lcm}
\opn\trdeg{tr\,deg}
\opn\reg{reg} \opn\lreg{lreg} \opn\ini{in} \opn\lpd{lpd}
\opn\size{size}
\opn\div{div} \opn\Div{Div} \opn\cl{cl} \opn\Cl{Cl}
\opn\Spec{Spec} \opn\Supp{Supp} \opn\supp{supp} \opn\Sing{Sing}
\opn\Ass{Ass} \opn\Min{Min}
\opn\Ann{Ann} \opn\Rad{Rad} \opn\Soc{Soc}
\opn\Im{Im} \opn\Ker{Ker} \opn\Coker{Coker} \opn\Am{Am}
\opn\Hom{Hom} \opn\Tor{Tor} \opn\Ext{Ext} \opn\End{End}
\opn\Aut{Aut} \opn\id{id}  \opn\deg{deg}
\opn\nat{nat}
\opn\pff{pf}
\opn\Pf{Pf} \opn\GL{GL} \opn\SL{SL} \opn\mod{mod} \opn\ord{ord}
\opn\Gin{Gin} \opn\Hilb{Hilb}
\opn\aff{aff} \opn\con{conv} \opn\relint{relint} \opn\st{st}
\opn\lk{lk} \opn\cn{cn} \opn\core{core} \opn\vol{vol}
\opn\link{link} \opn\star{star}
\opn\gr{gr}
\def\pot#1#2{#1[\kern-0.28ex[#2]\kern-0.28ex]}
\opn\dirlim{\underrightarrow{\lim}}
\opn\inivlim{\underleftarrow{\lim}}
\let\to=\rightarrow
\def\Implies{\ifmmode\Longrightarrow \else
        \unskip${}\Longrightarrow{}$\ignorespaces\fi}
\def\implies{\ifmmode\Rightarrow \else
        \unskip${}\Rightarrow{}$\ignorespaces\fi}
\def\iff{\ifmmode\Longleftrightarrow \else
        \unskip${}\Longleftrightarrow{}$\ignorespaces\fi}
\newtheorem{Theorem}{Theorem}[]
\newtheorem{Lemma}[Theorem]{Lemma}
\newtheorem{Corollary}[Theorem]{Corollary}
\newtheorem{Proposition}[Theorem]{Proposition}
\newtheorem{Remark}[Theorem]{Remark}
\newtheorem{Example}[Theorem]{Example}
\newtheorem{Conjecture}[Theorem]{Conjecture}
\newtheorem{Question}[Theorem]{Question}
\let\epsilon\varepsilon
\let\phi=\varphi
\let\kappa=\varkappa
\def\qed{\ifhmode\textqed\fi
      \ifmmode\ifinner\quad\qedsymbol\else\dispqed\fi\fi}
\def\textqed{\unskip\nobreak\penalty50
       \hskip2em\hbox{}\nobreak\hfil\qedsymbol
       \parfillskip=0pt \finalhyphendemerits=0}
\def\dispqed{\rlap{\qquad\qedsymbol}}
\opn\dis{dis}
\def\pnt{{\raise0.5mm\hbox{\large\bf.}}}
\opn\Lex{Lex}
\begin{document}

\title{ On a question of Swan \\ {\tiny with an Appendix by K\k{e}stutis \v{C}esnavi\v{c}ius}}

\author{ Dorin Popescu }
\thanks{}

\dedicatory{In the memory of Hideyuki Matsumura.}

\address{Dorin Popescu, Simion Stoilow Institute of Mathematics of the Romanian Academy, Research unit 5,
University of Bucharest, P.O.Box 1-764, Bucharest 014700, Romania}
\email{dorin.popescu@imar.ro}

\begin{abstract} We show that a regular local ring is a filtered inductive limit of regular local rings, essentially of finite type over $\bf Z$.

  {\it Key words } : Regular Rings, Smooth Morphisms,  Regular Morphisms, Discrete Valuation Rings, Projective Modules\\
 {\it 2010 Mathematics Subject Classification: Primary 13B40, Secondary 14B25,13H05,13C10.}
\end{abstract}

\maketitle

\vskip 0.5 cm

\section*{Introduction}

Let $R$ be a regular ring. The following is a well known question concerning finitely generated projective modules over polynomial $R$-algebras.

\begin{Conjecture} (Bass-Quillen Conjecture, \cite[Problem IX]{B}, \cite{Q}) Every finitely generated projective module $P$ over a polynomial $R$-algebra, $R[T]$, $T=(T_1,\ldots, T_n)$ is extended from $R$, i.e. 
$P\cong R[T]\otimes_R(P/(T)P)$. 
\end{Conjecture}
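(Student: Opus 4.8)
The plan is not to prove the conjecture outright --- it is still open in mixed characteristic --- but to reduce it, through a chain of standard d\'evissages, to a base case over which it is known, using the result announced in the abstract as the crucial descent step. First I would apply Quillen's local--global patching theorem: a finitely generated projective $R[T]$-module $P$ is extended from $R$ provided $P_\mm$ is extended from $R_\mm$ for every maximal ideal $\mm$ of $R$. This reduces the conjecture to the case of a regular \emph{local} ring $R$.

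The key reduction uses the main theorem. Write $R=\varinjlim_\lambda R_\lambda$ as a filtered inductive limit of regular local rings $R_\lambda$ essentially of finite type over $\ZZ$, with local transition homomorphisms, so that $R[T]=\varinjlim_\lambda R_\lambda[T]$. Since $P$ is finitely presented, both $P$ and the idempotent endomorphism of a finite free module that cuts it out descend to some index $\lambda_0$, yielding a finitely generated projective $R_{\lambda_0}[T]$-module $P_{\lambda_0}$ with $P\cong R[T]\otimes_{R_{\lambda_0}[T]}P_{\lambda_0}$; and conversely, if $P_{\lambda_0}$ were already extended from $R_{\lambda_0}$, the extending isomorphism would base change to $R$. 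Hence it suffices to prove the conjecture for a regular local ring essentially of finite type over $\ZZ$.

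For such a ring $R_0$ I would follow Lindel's method. Being a ``BQ-ring'' is stable under localization, so one may replace $R_0$ by a suitable further localization; Lindel's structure theorem then provides a pointed \'etale neighborhood $R'\to R_0$ in which $R'$ is a localization of a polynomial ring over a principal ideal domain --- a field, the ring $\ZZ$, or a discrete valuation ring. Lindel's theorem that BQ-rings are stable under such \'etale neighborhoods reduces the problem to projective modules over $A[T_1,\dots,T_m]$ with $A$ a PID. Over a field this is the Quillen--Suslin theorem; combined with the main theorem it already yields the conjecture for every regular local ring containing a field.

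The step I expect to be the main obstacle is the residual base case in genuinely mixed characteristic: finitely generated projective modules over $V[T_1,\dots,T_m]$ for $V$ a (possibly ramified) discrete valuation ring, together with the point in Lindel's reduction at which one would like to invert an element divisible by the residue characteristic. This is precisely the range in which the conjecture is not known, so a complete proof along these lines would need new input --- for example a patching argument passing from the unramified to the ramified discrete valuation ring, or a direct analysis of $\ZZ_p[T]$ --- whereas in the equicharacteristic case the chain above is complete.
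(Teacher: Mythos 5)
The statement you were asked to prove is labelled a \emph{Conjecture} in the paper, and the paper contains no proof of it: the Bass--Quillen Conjecture remains open, and the author says explicitly in Remark \ref{r1} that the main theorem of this paper ``seems to be not very useful for the Bass-Quillen Conjecture.'' Your proposal is correctly calibrated in refusing to claim a proof. The reductions you describe --- Quillen's local-global patching to the regular local case, descent of a finitely presented projective module and its extending isomorphism along the filtered limit supplied by Theorem \ref{s}, and Lindel's \'etale-neighborhood method over a base PID --- are all sound and are essentially the route contemplated in the introduction: they are how the known cases (i)--(iii) and \cite[Theorem 4.1]{P3} were obtained, and in equal characteristic the chain closes via Quillen--Suslin and Lindel.

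The genuine gap is the one you name, and it is worth locating it precisely relative to the paper's construction. The regular local rings essentially of finite type over $\ZZ$ produced by Theorems \ref{t0} and \ref{s} are built by repeatedly adjoining roots of regular parameters, i.e.\ by passing through ramified extensions of the form $(B[X]/(X^e-b))_{(X)}$ as in Lemma \ref{lm} and Proposition \ref{pm}. To propagate BQ up such a tower one would need exactly statement (iii) of Remark \ref{r1}: if BQ holds for a regular local ring $A$ and $b$ lies in a regular system of parameters, then it holds for a localization of $A[W]/(W^e-b)$. The paper warns that the example $B=(\RR[X_1]_{(X_1)}[X_2]/(X_2^3-X_1^2))_{(X_1,X_2)}$ from \cite{La}, which carries non-free rank-one projective modules over $B[T]$, suggests such a transfer can fail (though that $B$ is not regular, so the evidence is only suggestive). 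Your ``residual base case'' of projective modules over $V[T_1,\ldots,T_m]$ for a ramified DVR $V$ is this same obstruction in its simplest form. So your proposal is an accurate account of the state of the art and of where new input is required, but it is not a proof --- and there is no proof in the paper to compare it against.
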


The Bass-Quillen Conjecture (shortly the BQ Conjecture) has positive answers in the following cases

i) if $\dim R\leq 1$ by Quillen and Suslin (see \cite{Q}, \cite{Su}),

ii) if $R$ is essentially of finite type over a field by H. Lindel (see \cite{L}),

iii) if $R$ is a local ring of unequal characteristic, essentially of finite type over $\bf Z$, let us say $(R,\mm,k)$ with $p=$char$\ k\not \in \mm^2$ by Swan  \cite{Mu}.

Swan  noticed that it will be useful for the general question  to have  a positive answer to the following  one.

\begin{Question} (Swan \cite{Mu}) A regular local ring is a filtered inductive limit of regular local rings, essentially of finite type over $\bf Z$.
\end{Question}

A partial positive answer is given below.

\begin{Theorem} (\cite[Theorem 3.1]{P3}) The Swan question has a positive answer for a regular local ring $(R,\mm,k)$ in the following cases
\begin{enumerate}
\item $p=\mbox{char}\ k\not \in \mm^2$,
\item $R$ contains a field,
\item $R$ is excellent Henselian.
\end{enumerate}
\end{Theorem}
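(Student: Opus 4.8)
The plan is to derive everything from the general N\'eron desingularization theorem (N\'eron--Popescu desingularization): a homomorphism $A\to B$ of Noetherian rings is regular if and only if $B$ is a filtered inductive limit of smooth finitely generated $A$-algebras. Call a local ring \emph{good} if it is a filtered inductive limit of regular local rings essentially of finite type over $\ZZ$. The key is the following transfer principle, which I would prove first: \emph{if $A$ is good and $A\to R$ is a regular homomorphism of Noetherian rings with $R$ local, then $R$ is good}. Indeed, desingularization gives $R=\varinjlim_i C_i$ with $C_i$ smooth --- hence regular --- of finite type over $A$; writing $A=\varinjlim_j A_j$ with $A_j$ regular local essentially of finite type over $\ZZ$, a standard spreading-out argument descends each $C_i$, compatibly, to a smooth finite type algebra over some $A_j$, which is then a regular ring essentially of finite type over $\ZZ$; localizing each such algebra at the preimage of $\mm$ exhibits $R$ as a filtered inductive limit of regular local rings essentially of finite type over $\ZZ$ (localizing at elements already units in $R$ changes nothing in the limit). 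I would also record that $\ZZ$ and $\FF_p=\ZZ/p\ZZ$ are good, and that any Henselization of a regular local ring essentially of finite type over $\ZZ$ is good, being a filtered limit of localizations of \'etale, hence regular, algebras. It then remains, in each of the three cases, to exhibit a good base ring mapping regularly to $R$.

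\textbf{Cases (1) and (2).} If $R$ contains a field of characteristic $p>0$ it contains $\FF_p$, and $\FF_p\to R$ is regular: it is flat, and its only fibre $R$ is geometrically regular over the perfect field $\FF_p$ since regularity and geometric regularity coincide over a perfect field. In every remaining situation $R$ has characteristic $0$ --- in case (2) because $R\supseteq\QQ$, which forces $\chara k=0$, and in case (1) because $p=\chara k\notin\mm^2$ forces $\chara R\neq p$ --- and then $\ZZ\to R$ is regular: $R$ is $\ZZ$-torsion-free, hence $\ZZ$-flat; for a rational prime $\ell$ the fibre $R\otimes_{\ZZ}\FF_\ell=R/\ell R$ vanishes unless $\ell=\chara k$, in which case it is $R/pR$, a regular local ring because $p\notin\mm^2$, hence geometrically regular over the perfect field $\FF_p$; and the generic fibre $R\otimes_{\ZZ}\QQ$, a localization of $R$, is regular, hence geometrically regular over $\QQ$. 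In both cases the transfer principle concludes.

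\textbf{Case (3).} After (1) and (2) only the ramified mixed-characteristic case survives: $\chara R=0$, $\chara k=p>0$, $p\in\mm^2$, with $R$ now excellent Henselian. Here $\ZZ$ is useless --- the fibre $R/pR$ is singular --- and no base essentially of finite type over $\ZZ$ can in general be mapped regularly into $R$, since that would settle Swan's question in full. So I would first manufacture a larger good base $V$. The plan: fix a regular parameter $\varpi\in\mm\setminus\mm^2$ and construct inside $R$ a discrete valuation ring $V$ whose uniformizer is a unit multiple of $\varpi$ and whose residue field is $k$ (or at least such that $R/\varpi R$ is geometrically regular over $\kappa(V)$ and $k$ is separable over $\kappa(V)$), so that $R$ is $V$-flat with both fibres --- generic and closed --- geometrically regular, i.e. $V\to R$ is regular; realise $V$ as a Henselization, formed inside the Henselian ring $R$, of a directed union of discrete valuation rings essentially of finite type over $\ZZ$ whose residue fields exhaust $k$ and which share $\varpi$ up to units --- this is where the Henselian hypothesis enters, letting the ramified arithmetic of $R$ be captured by an algebraic coefficient ring rather than a purely formal one --- so that $V$ is good; then apply the transfer principle along $V\to R$. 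Excellence of $R$ is used to ensure that the completions and Henselizations occurring in the construction are regular homomorphisms.

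\textbf{Main obstacle.} Cases (1) and (2) are immediate once desingularization is available; all the difficulty lies in case (3), namely in constructing the good coefficient discrete valuation ring $V\subseteq R$ with $V\to R$ \emph{genuinely} regular --- i.e. verifying geometric regularity, not merely regularity, of its fibres --- in the presence of (possibly wild) ramification and imperfect residue fields, together with the limit bookkeeping needed to pass from ``essentially of finite type over $V$'' to ``essentially of finite type over $\ZZ$'' when $V$ is itself only a filtered limit of such rings. This is exactly where the excellent-Henselian hypothesis is essential and cannot at present be removed, which is why Swan's question remains open in general.
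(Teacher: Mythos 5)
This theorem is quoted in the paper from \cite[Theorem 3.1]{P3} without proof, so your argument has to be judged on its own merits. Your treatment of cases (1) and (2) is correct and is essentially the classical argument: one checks that $\ZZ\to R$ (respectively $\FF_p\to R$) is a regular morphism and applies General N\'eron Desingularization (Theorem \ref{gnd}), noting that smooth finite type algebras over $\ZZ$ or $\FF_p$, localized at the preimage of $\mm$, are regular local and essentially of finite type over $\ZZ$. The verifications that $R/pR$ is regular when $p\notin\mm^2$ and that geometric regularity is automatic over the perfect fields $\QQ$ and $\FF_p$ are exactly right.

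Case (3), however, contains a genuine gap, and the specific construction you propose cannot work in general. A discrete valuation ring $V\subseteq R$ dominated by $R$ (which it must be, if its residue field is to be $k$) whose uniformizer is a unit multiple of a regular parameter $\varpi$ of $R$ necessarily satisfies $p=u\varpi^{e}$ with $e$ the $V$-valuation of $p$ and $u\in V^\times\subseteq R\setminus\mm$; but for a general mixed-characteristic regular local ring with $p\in\mm^2$ the element $p$ is not a unit times a power of a single prime element --- this is precisely why the proof of Theorem \ref{s} must write $p\equiv t_0\prod_{i}z_{1i}^{\alpha_{1i}}$ with several distinct primes $z_{1i}$. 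Even in dimension one, where such a $V$ does exist, Lemma \ref{lm} and Example \ref{e} of the paper show how delicate it is to manufacture inside $R$ a well-behaved DVR base; and you establish neither that your $V$ is itself a filtered limit of regular local rings essentially of finite type over $\ZZ$, nor that the closed fibre $R/\varpi R$ is geometrically regular over $\kappa(V)$ --- in positive characteristic with imperfect residue fields this is the entire difficulty, and you flag it as the ``main obstacle'' without resolving it. The route actually available for the excellent Henselian case is different: one first treats the complete case via Cohen structure theory (writing $\widehat{R}$ as a quotient of an unramified complete regular local ring, which falls under case (1), by one element of a regular system of parameters, and passing such quotients through the limit), and then descends from $\widehat{R}$ to $R$ using the Artin approximation property of excellent Henselian rings. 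Your sketch uses neither completion nor approximation, and the excellence hypothesis enters only as an unexamined assurance; as written, case (3) is a plan rather than a proof.
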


Using the above theorem we showed using Lindel's and Swan's results that the BQ Conjecture holds for regular
local ring containing a field, or regular local rings $(R,\mm,k)$ with $p=$char$\ k \not \in \mm^2$ (see \cite[Theorem 4.1]{P3}). After almost 30 years, we noticed that complete positive answers of the above questions  are  missing and there are still people interested to have them.

The purpose of the present paper is to give a  complete positive answer to the Swan Question (see Theorem \ref{s} where important is the last sentence, which follows from   the first part of the statement necessary only for the proof). When $\dim R=1$ our Theorem \ref{t0} uses a condition of separability which we remove it in Theorem \ref{t2}.  Unfortunately, our result seems to be not very useful for the Bass-Quillen Conjecture (see Remark \ref{r1}). The appendix shows that our Corollary \ref{c}
could be used to reduce a well known conjecture on {\em purity} to the complete case (see Proposition \ref{pu}).

\vskip 0.3 cm

\section{Discrete Valuation Rings of unequal characteristic.}
\vskip 0.3 cm

Let $(A,\mm)$ be a discrete valuation ring (a  DVR for short) of unequal characteristic. Then $A$ dominates ${\bf Z}_{(p)}$, where $p=$char $k$, $k:=A/\mm$. We suppose that $p\in \mm^2$ and the field extension $k\supset {\bf F}_p$  is {\em separably generated}.

\begin{Remark}\label{r0} Suppose that the fraction field $K=\mbox{Fr}(A)$ of $A$ is a finite type extension of $\bf Q$.
Then it is possible that $k/{\bf F}_p$ is not a finite type field extension (see \cite[Theorem 6.1]{Z}). Thus 
$k/{\bf F}_p$ need not  be  separably generated.
\end{Remark}

\begin{Lemma} \label{lm}  There exists a DVR subring $B$  of $A$ and a regular parameter $x$ of $A$ such that

\begin{enumerate}
\item $B\subset A$ is a ramified extension inducing an algebraic separable  extension on the residue fields,
\item B contains  a power $b=x^e$, $1<e\in {\bf N}$ of    $x$, which is a regular parameter in $B$, 
\item $C=B[x]_{(x)}$ is a DVR subring of $A$ such that $C\cong (B[X]/(X^e-b))_{(X)}$,
\item the extension $C\subset A$ is unramified.
\end{enumerate}
\end{Lemma}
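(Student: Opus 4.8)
The plan is to factor the inclusion ${\bf Z}_{(p)}\hookrightarrow A$ through a ramified coefficient DVR $B$, whose maximal ideal is generated by a perfect power $b=x^{e}$ of a uniformiser $x$ of $A$, and the unramified step $C=B[x]\subseteq A$; the separable generation of $k/{\bf F}_p$ is what keeps every residue field occurring separable over the rational function field one lands on, and the completion of $A$ is used to recognise the Eisenstein relation.

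First I would fix a separating transcendence basis $(t_\lambda)$ of $k/{\bf F}_p$, lift it to units $\tilde t_\lambda\in A$, and put $B_0:={\bf Z}[\tilde t_\lambda]_{\mm\cap{\bf Z}[\tilde t_\lambda]}\subseteq A$. Here $\mm\cap{\bf Z}[\tilde t_\lambda]=(p)$, because modulo $p$ one obtains the domain ${\bf F}_p[t_\lambda]$; thus $B_0$ is a DVR with uniformiser $p$ and residue field ${\bf F}_p(t_\lambda)$, unramified over ${\bf Z}_{(p)}$. Since $k/{\bf F}_p(t_\lambda)$ is separable algebraic, adjoining to $B_0$ lifts of elements of $k$ only enlarges the residue field by finite separable extensions, so the fibre over $(p)$ remains a domain; localising, one gets an intermediate DVR $B_1\subseteq A$ with uniformiser $p$ and residue field $k$. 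Set $e:=v_A(p)$; then $e\ge 2$ because $p\in\mm^2$.

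Next I would pass to the $\mm$-adic completion $\hat A$, again a DVR, with residue field $k$ and containing $\widehat{B_1}$, which is a Cohen ring for $k$. By Cohen's structure theorem (existence of a coefficient ring) together with Weierstrass preparation in $\widehat{B_1}[[T]]$, one gets $\hat A\cong\widehat{B_1}[T]/(f)$ with $f=T^{e}+c_{e-1}T^{e-1}+\dots+c_0$ Eisenstein over $\widehat{B_1}$ of degree $e=v_A(p)$, $c_0$ being an associate of $p$. In particular any uniformiser $\pi$ of $A$ satisfies $\pi^{e}\equiv -c_0\pmod{\mm^{e+1}}$, so up to a unit $\pi^{e}$ equals $p$.

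The remaining, and principal, step is to descend this picture to $A$: to find a uniformiser $x$ of $A$, a divisor $1<e'\mid e$, and a DVR $B$ with $B_0\subseteq B\subseteq B_1$ for which, writing $u:=p\,x^{-e'}$, the element $b:=x^{e'}=u^{-1}p$ is a regular parameter of $B:=A\cap{\bf Q}(\tilde t_\lambda,u,x^{e'})$ — that is, $B$ is cut out inside $A$ by the single relation $u\,x^{e'}=p$. The delicate point is to choose $x$ (adjusting it within its associates, and if necessary enlarging the transcendence data) so that this finitely generated ${\bf Z}$-subalgebra is genuinely one-dimensional at its intersection with $\mm$, i.e.\ that no spurious ramification appears; this is exactly where the separable-generation hypothesis is indispensable — its necessity being the point of Remark \ref{r0} — since it forces the Eisenstein shape of $f$ and hence that $\bar u$, and the residue field ${\bf F}_p(t_\lambda)(\bar u)$ of $B$, are separable over ${\bf F}_p(t_\lambda)$ while $k$ stays separable algebraic over them. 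Granting this, (1)–(4) follow formally: $B\subseteq A$ has ramification index $e'>1$ with an algebraic separable residue extension; $b=x^{e'}$ is a regular parameter of $B$; $X^{e'}-b$ is Eisenstein over $B$, hence the minimal polynomial of $x$ over $\mbox{Fr}(B)$, giving $C=B[x]_{(x)}\cong(B[X]/(X^{e'}-b))_{(X)}$; and $C\subseteq A$ shares the uniformiser $x$ and has a separable residue extension, so it is unramified.
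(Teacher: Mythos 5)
Your outline follows the same general strategy as the paper (lift a separating transcendence basis of $k/{\bf F}_p$ to get a ramified base DVR, then cut out $B$ as an intersection of $A$ with a subfield of $K$ generated by the unit $u=p\,x^{-e'}$, and finish with the unramified Eisenstein step $C=B[x]$), but it stops exactly where the real work begins. You write that ``the remaining, and principal, step'' is to choose $x$ and the subfield so that $b=x^{e'}$ is genuinely a regular parameter of $B=A\cap{\bf Q}(\tilde t_\lambda,u)$ with no spurious ramification, and then you say ``Granting this, (1)--(4) follow formally.'' That granted claim \emph{is} the content of the lemma, and it is not automatic: the paper's Example \ref{e} exhibits precisely the failure mode you wave away, namely that $B'=A\cap{\bf Q}(y,t)$ (with $p=x^st$) can itself be a ramified extension of the base DVR $C_0$, so a single intersection does not suffice. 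The paper's proof supplies two ingredients you are missing: (a) an argument (via adjoining $t^{p^r}$ with $p^r>s$ and playing pure inseparability of the residue extension against the separability hypothesis) that $t$ is algebraic over ${\bf Q}(y)$, so the relevant field extensions are finite; and (b) an induction on the degree $[{\bf Q}(y,t):{\bf Q}(y)]$, iterating the construction through $A\cap{\bf Q}(y,t,t'),\dots$ until one reaches a DVR $(B,(z))$ containing $x^e$ with $z\notin x^{e'}A$ for $e'<e$. Without (a) and (b) your proof is a restatement of the statement.

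A secondary problem: your intermediate ring $B_1\subseteq A$, a DVR with uniformiser $p$ and residue field all of $k$, need not exist. Adjoining to $B_0$ an arbitrary lift $\tilde\alpha\in A$ of a separable algebraic $\alpha\in k$ can produce a transcendental element of $\mathrm{Fr}(B_0)$, in which case $\mm\cap B_0[\tilde\alpha]$ has height $2$ and the localisation is not a DVR; producing an actual unramified coefficient ring with residue field $k$ inside $A$ is a Henselian-type statement that fails for general $A$. The paper avoids this by never enlarging the residue field of the base beyond ${\bf F}_p(\bar y)$; the residue extension of $C\subset A$ in the conclusion is only required to be algebraic separable, not trivial. (Your use of the completion and Weierstrass preparation to see $p=\pi^e\cdot(\text{unit})$ is also superfluous: this is immediate from $A$ being a DVR.)
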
  

\begin{proof} A lifting  of a separable transcendence base ${\bar y}=({\bar y}_i)_{i\in I}$ of $k$ over ${\bf F}_p$ induces a system of algebraically independent elements  $y:=( y_i)_{i\in I}$, $y_i\in A$ of $K$  over ${\bf Q}$.  
 Then the ring $C_0={\bf Z}[(Y_i)_{i\in I}]_{p{\bf Z}[(Y_i)_{i\in I}]}$ is a DVR (see e.g \cite[Theorem 83]{M}).  Consider the flat map  $\psi_0:C_0\to A$ given by $Y\to y$. Note that $\psi_0$ is  ramified and induces an algebraic separable extension on the residue fields. Suppose that $p=x^st$ for some regular parameter $x$ of $A$ and an unit element $t\in A$, $1< s\in {\bf N}$.

We claim that $t$ is not  transcendental over ${\bf Q}(y)$. Indeed, choose $r$ such that $p^r>s$.  Then  $B= B'\cap  {\bf Q}(y,t^{p^r})$ is a DVR  and the residue field extension induced by $B\subset B'$ is pure  inseparable because  $u^{p^r}\in B $ modulo $\mm\cap B$ for every  $u\in B'$.   As it is also   algebraic separable by hypothesis, we see that this residue field extension is trivial and so $B'$ must be  a ramified extension of $B$ of order $e_{B'/B}=p^r>s$ which is false.

 The DVR  $B'=A\cap {\bf Q}(y,t)$ contains $x^s=p/t$. If $B'$ is an unramified extension of $C_0$ then the polynomial $X^s-p/t$ is irreducible in $B'[X]$, 
 $$C=B'[x]_{(x)}\cong (B'[X]/(X^s-p/t))_{(X)}$$
  is a DVR subring of $A$, the extension $C\subset A$ is unramified and induces an algebraic separable extension on the residue fields.

Otherwise (this is possible as shows Example \ref{e}),  choose  a regular parameter $z'$ of $B'$. Then $z'=x'^{s'}t'$ for some regular parameter $x'$ of $A$, $1\leq s'<s$ and an invertible element $t'\in A$. Note that the field extensions ${\bf Q}(y)\subset {\bf Q}(y,t')\subset {\bf Q}(y,t)$ are finite and the last one of degree $s'$ because ${\bf Q}(y,t')={\bf Q}(y,s')$, ${\bf Q}(y,t)={\bf Q}(y,s)$. 

If $s'=1$ then $B'\subset A$ is unramified. Using again this procedure we  arrive in some steps  to a DVR subring $B''$ of $ B'$ such that the extension $B''\subset B'$ is ramified. This is because the degree of the corresponding fraction field extensions over ${\bf Q}(y)$ decreases in each step, in the worst  case $B''=C_0$.

 Thus we may assume that $s'>1$. 
 Repeating this procedure for $A\cap {\bf Q}(y,t,t')$ and so on,
we arrive in some steps to a DVR  subring  $(B,(z))$  of $A$ containing a power $b=x^e$, $e>1$ of a regular parameter $x$ of $A$ with $z\not \in x^{e'}A$  for $e'<e$. Then $C=B[x]_{(x)}\cong (B[X]/(X^e-b))_{(X)}$ is a DVR subring of $A$, the extension $C\subset A$ is unramified and induces an algebraic separable extension on the residue fields. 
\hfill\ \end{proof} 

\begin{Example}\label{e} {\em Note that  $B=({\bf Z}[Y]/(Y^2-5))_{(Y)}$ is a DVR, ramified extension of $C={\bf Z}_{(5)}$. The polynomial $f=X^4-5/(1+Y)\in B[X] $ is irreducible and $D=(B[X]/(f))_{(X)}$ is  a DVR, ramified extension of $B$. Note that $D\cap {\bf Q}(Y)=B$ is a ramified extension of $C$. Thus in the above proof it is possible that  $B'$ could be indeed a ramified extension of $C_0$.}
\end{Example}     

Using by recurrence the above lemma we get the following proposition.

\begin{Proposition} \label{pm} In the notations of the above lemma, there exists  extensions of DVR subrings of $A$,
 $$C_0\subset B_1\subset C_1\subset \ldots B_r\subset C_r\subset B_{r+1}=A,$$ $C_0$ being defined in Lemma \ref{lm}, such that 
\begin{enumerate}
\item $C_i\subset B_{i+1}$ is an unramified extension for any $0\leq i\leq r$ inducing an algebrac separable  extension on the residue fields,
\item  $C_i\cong (B_i[X]/(X^{e_i}-b_i))_{(X)}$ for some $b_i\in B_i$, $1<e_i\in {\bf N}$, which is a regular parameter in $B_i$.
\end{enumerate}
\end{Proposition}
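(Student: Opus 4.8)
The plan is to iterate Lemma~\ref{lm}, peeling off one ``unramified over a radical subextension'' layer at a time while descending from $A$ toward $C_0$. First I would set $B_{r+1}:=A$, with $r$ to be determined, and apply Lemma~\ref{lm} to $A$: this produces a sub-DVR $B_r:=B$ of $A$, an integer $e_r:=e>1$, an element $b_r:=b\in B_r$ which is a regular parameter of $B_r$, and $C_r:=C=B_r[x]_{(x)}\cong(B_r[X]/(X^{e_r}-b_r))_{(X)}$, with $B_r\subseteq C_r\subseteq B_{r+1}$, the extension $C_r\subseteq B_{r+1}$ unramified and inducing an algebraic separable residue extension, and $B_r\subseteq B_{r+1}$ ramified of degree $e_r$. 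Two features of the construction carried out in the proof of Lemma~\ref{lm} will be used repeatedly: the sub-DVR it produces is of the form $A\cap L$ for some subfield $L$ of $K$ with ${\bf Q}(y)\subseteq L$, hence contains $C_0$ (which lies in both $A$ and ${\bf Q}(y)$); and any sub-DVR $B'$ of $A$ containing $C_0$ is automatically of unequal characteristic, since $p\in C_0\subseteq B'$ and $p\in\mm_A$ force $\mm_A\cap B'=\mm_{B'}$, the unique nonzero prime of the DVR $B'$, so $p\in\mm_{B'}$.

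To continue the recursion I must verify the hypotheses of Lemma~\ref{lm} for each sub-DVR $B_{i+1}\supseteq C_0$ that arises: unequal characteristic (just noted), $p\in\mm_{B_{i+1}}^2$, and separable generation of the residue field of $B_{i+1}$ over ${\bf F}_p$. For the last, the inclusion $C_0\subseteq B_{i+1}$ induces an inclusion of residue fields ${\bf F}_p((\bar y_i)_{i\in I})=C_0/\mm_{C_0}\subseteq B_{i+1}/\mm_{B_{i+1}}$; the bottom field is purely transcendental over ${\bf F}_p$ on the \emph{separating} transcendence base $\bar y$ of $k/{\bf F}_p$, so $k$ is separable algebraic over it, hence so is the subextension $B_{i+1}/\mm_{B_{i+1}}$, and therefore $\bar y$ is a separating transcendence base of $B_{i+1}/\mm_{B_{i+1}}$ over ${\bf F}_p$ as well. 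As for $p\in\mm_{B_{i+1}}^2$: since $p$ is a uniformizer of $C_0$, the ramification index $e(B_{i+1}/C_0)$ equals the $\mm_{B_{i+1}}$-adic order of $p$, so $p\in\mm_{B_{i+1}}^2\iff e(B_{i+1}/C_0)\geq 2$; this is exactly the condition signalling that the recursion has not yet terminated.

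So, as long as $e(B_{i+1}/C_0)\geq 2$, I apply Lemma~\ref{lm} to $B_{i+1}$, taking care to reuse the same lift $y\subseteq C_0\subseteq B_{i+1}$ of $\bar y$, so that the ring ``$C_0$'' produced by that lemma is literally the same $C_0$; this yields $C_i,B_i,e_i>1,b_i$ with $C_0\subseteq B_i\subseteq C_i\subseteq B_{i+1}$, with $C_i\subseteq B_{i+1}$ unramified inducing an algebraic separable residue extension, $C_i\cong(B_i[X]/(X^{e_i}-b_i))_{(X)}$, and $b_i$ a regular parameter of $B_i$. Since $C_i\subseteq B_{i+1}$ is unramified while $B_i\subseteq C_i$ is ramified of degree $e_i$, we get $e(B_{i+1}/B_i)=e_i\geq 2$, hence $e(B_i/C_0)=e(B_{i+1}/C_0)/e_i$ strictly decreases; being a positive integer it reaches $1$ after finitely many steps, say $r$ of them, with $\prod_{i=1}^{r}e_i=e(A/C_0)<\infty$, as $p$ has finite order in the DVR $A$. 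At that stage $e(B_1/C_0)=1$, i.e. $p\notin\mm_{B_1}^2$, and the recursion stops. The result is the chain $C_0\subseteq B_1\subseteq C_1\subseteq\cdots\subseteq B_r\subseteq C_r\subseteq B_{r+1}=A$; property~(2) holds by construction and property~(1) holds for $1\leq i\leq r$ by Lemma~\ref{lm}, while for $i=0$ the extension $C_0\subseteq B_1$ is unramified (both rings have uniformizer $p$, as $e(B_1/C_0)=1$) with residue extension ${\bf F}_p((\bar y_i)_{i\in I})\subseteq B_1/\mm_{B_1}$, which is separable algebraic as above.

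The step I expect to need the most care is the persistence of the separability hypothesis of Lemma~\ref{lm} through the recursion: one must keep every intermediate DVR lying over the \emph{same} ring $C_0$, and then deduce that its residue field stays separably generated over ${\bf F}_p$ from the stability of ``separable algebraic'' under passage to subextensions. Once this is granted, the remaining points --- that sub-DVRs of a DVR of unequal characteristic are again of unequal characteristic, and that the recursion terminates because $e(A/C_0)$ is the finite order of $p$ in $A$ --- are routine.
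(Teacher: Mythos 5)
Your proposal is correct and follows exactly the route the paper intends: the paper offers no proof beyond ``using by recurrence the above lemma,'' and your argument is a careful unwinding of that recursion applied to $B_{i+1}$ at each stage. The two points you rightly single out for care --- that every intermediate DVR contains the same $C_0$ (so its residue field stays separable algebraic over ${\bf F}_p(\bar y)$, allowing Lemma~\ref{lm} to be reapplied) and that termination follows from the strict decrease of $e(B_i/C_0)$ dividing the finite order $s$ of $p$ in $A$ --- are precisely the details the paper leaves implicit, and you supply them correctly.
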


\begin{Theorem} \label{t0} Suppose that $K=$Fr$A$ is  a field  extension of $\bf Q$ not necessarily of finite type. Then $A$ is a filtered inductive union of    regular local subrings $(R_i)_{i\in I}$ of $A$, essentially of finite type over $\bf Z$.
\end{Theorem}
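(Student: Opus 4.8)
The plan is to use Proposition \ref{pm} to induct along the chain of DVR's
\[
C_0\subset B_1\subset C_1\subset\cdots\subset B_r\subset C_r\subset B_{r+1}=A ,
\]
showing for each term, from left to right, that it is a filtered inductive union of sub-DVR's of $A$ which are essentially of finite type over ${\bf Z}$. Since a DVR is regular local, the case $B_{r+1}=A$ is precisely the theorem; the point of passing through the chain is that it breaks $C_0\subset A$ into steps of only two shapes, while $C_0$ itself is visibly such a union.

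For the base case, $C_0={\bf Z}[(Y_i)_{i\in I}]_{p{\bf Z}[(Y_i)_{i\in I}]}$ is the filtered union of the subrings ${\bf Z}[(Y_i)_{i\in J}]_{(p)}$ over the finite subsets $J\subseteq I$; each of these is a DVR (a localization of a polynomial ring over ${\bf Z}$ at the height one prime generated by $p$), is essentially of finite type over ${\bf Z}$, and is a subring of $A$ via the flat local homomorphism $\psi_0$ of Lemma \ref{lm}. For a step of the first shape $B_i\subset C_i$, recall $C_i\cong(B_i[X]/(X^{e_i}-b_i))_{(X)}$ with $b_i$ a regular parameter of $B_i$; since $b_i$ is a uniformizer, $X^{e_i}-b_i$ is Eisenstein over $B_i$, so $C_i=B_i[X]/(X^{e_i}-b_i)$ is itself a DVR with uniformizer $X$, which is sent to the chosen $e_i$-th root $x\in A$ of $b_i$. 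Writing $B_i=\bigcup_\lambda R_\lambda$ with the $R_\lambda$ sub-DVR's of $A$ essentially of finite type over ${\bf Z}$, and keeping only those $\lambda$ with $b_i\in R_\lambda$, one checks that $b_i$ is forced to be a uniformizer of each such $R_\lambda$: the valuation of $B_i$ restricts on $\mbox{Fr}(R_\lambda)$ to $e$ times $v_{R_\lambda}$ for some integer $e\ge 1$ (inside its fraction field a DVR has no proper overrings other than the field itself), so $1=v_{B_i}(b_i)=e\cdot v_{R_\lambda}(b_i)$ forces $e=v_{R_\lambda}(b_i)=1$. Then each $R_\lambda[X]/(X^{e_i}-b_i)$ is again a DVR, essentially of finite type over ${\bf Z}$, a subring of $A$, and $C_i=\bigcup_\lambda R_\lambda[X]/(X^{e_i}-b_i)$ is a filtered union.

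A step of the second shape $C_i\subset B_{i+1}$ (including the last one, $C_r\subset A$) is an unramified extension inducing an algebraic separable residue field extension. Here one uses that such a $B_{i+1}$ is a filtered union of sub-$C_i$-algebras $D_j$ that are essentially \'etale over $C_i$; each $D_j$, being a localization of a finite \'etale $C_i$-algebra $T$ at a maximal ideal and $T$ being regular of dimension one, is a DVR. Writing $C_i=\bigcup_\lambda R_\lambda$ as above and applying the standard spreading-out (limit) theorems produces, for large $\lambda$, essentially \'etale $R_\lambda$-algebras $R'_{\lambda,j}$ — again DVR's, essentially of finite type over ${\bf Z}$, and subrings of $A$ — with $D_j=\bigcup_\lambda R'_{\lambda,j}$; the doubly indexed family $(R'_{\lambda,j})$ is then filtered with union $B_{i+1}$. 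Carrying this through the finitely many steps of the chain presents $A$ as a filtered inductive union of sub-DVR's essentially of finite type over ${\bf Z}$, which proves the theorem.

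The main obstacle is the step of the second shape, and there are two delicate points. First, the structural claim that an unramified extension of DVR's with separable algebraic residue field extension is an increasing filtered union of sub-DVR's essentially \'etale over the base: one reduces to module-finite sub-$C_i$-algebras $D$ of $B_{i+1}$, passes to integral closures inside the (integrally closed) DVR $B_{i+1}$ and localizes, and one must keep these rings Noetherian, one-dimensional and essentially of finite type — this is where Krull--Akizuki and the excellence (Nagata property) of $C_0$ enter, or else one argues directly with standard \'etale neighbourhoods so as to sidestep normalization. Second, the limit arguments have to be set up so that every ring produced is genuinely a subring of $A$ and the combined doubly indexed system is directed with the correct union. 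The first shape of step, once the Eisenstein property is observed, and the base case are routine.
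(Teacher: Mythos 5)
Your overall strategy --- propagating a ``filtered union'' statement bottom-up along the chain of Proposition \ref{pm}, rather than, as the paper does, catching a given finite-type subalgebra $E$ at the top via N\'eron desingularization for $C_r\subset A$ and then descending the resulting regular local ring step by step down to $C_0$ --- is a legitimate alternative organization, and your base case and your Eisenstein analysis of the steps $B_i\subset C_i$ are sound as far as they go. The gap is in the steps of the second shape $C_i\subset B_{i+1}$: it is \emph{not} true that an unramified extension of DVRs inducing a separable algebraic residue field extension is a filtered union of essentially \'etale subalgebras. An essentially \'etale local $C_i$-algebra is a localization of a quasi-finite (in your formulation, finite) \'etale algebra, so its fraction field is finite over $\mbox{Fr}(C_i)$; but $\mbox{Fr}(B_{i+1})$ typically contains elements transcendental over $\mbox{Fr}(C_i)$ even when the residue extension is trivial. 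Already ${\bf Z}_{(p)}\subset {\bf Z}_p$ is a counterexample: a $p$-adic integer $t\in p{\bf Z}_p$ transcendental over ${\bf Q}$ lies in no essentially \'etale ${\bf Z}_{(p)}$-subalgebra, and the natural regular local subring capturing it, ${\bf Z}_{(p)}[t]_{(p,t)}$, has dimension $2$. This is exactly why classical N\'eron desingularization produces essentially \emph{smooth} local subalgebras of unbounded dimension rather than DVRs, and why Theorem \ref{t0} asserts only ``regular local subrings'' even though $A$ itself is a DVR. Consequently your inductive hypothesis ``each term of the chain is a filtered union of sub-DVRs essentially of finite type over ${\bf Z}$'' already fails after the very first step $C_0\subset B_1$, and with it the uniformizer argument on which your treatment of the Eisenstein steps relies.

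The repair is to weaken the induction to ``filtered union of \emph{regular local} subrings essentially of finite type over ${\bf Z}$''. The unramified steps then become precisely the classical N\'eron desingularization cited in the paper, and the Eisenstein steps survive after one observation: since $b_i$ is a uniformizer of $B_i$ one has $b_i\notin\mm_{B_i}^2\supseteq\mm_{R_\lambda}^2$, so $b_i$ is part of a regular system of parameters of any regular local subring $R_\lambda\ni b_i$ dominated by $B_i$, and the localization of $R_\lambda[X]/(X^{e_i}-b_i)$ at $(\mm_{R_\lambda},X)$ is again regular local. Making the two kinds of steps interact coherently --- so that the algebra over which a given finite subset of $A$ is defined can be pushed through every level of the chain --- is exactly the content of the paper's descent of $R_r$ through $T'_{r-1}$, $R_{r-1}$, and so on down to $R_0$; after the correction your argument becomes essentially the paper's proof run in the opposite direction.
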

\begin{proof}    It is enough  to show that for a finite type $\bf Z$-algebra $E\subset A$ there exists a regular local subring $R\subset A$ which contains $E$ and it is essentially of finite type over $\bf Z$. 
 By  Proposition \ref{pm}  there exists  extensions of DVR subrings of $A$,
  $$C_0\subset B_1\subset C_1\subset \ldots B_r\subset C_r\subset B_{r+1}=A$$
   such that (1), (2) hold above.
Apply the classical N\'eron Desingularization (see \cite{N}, \cite[Theorem 1]{KAP}) for the case $C_r\subset B_{r+1}=A$. Then there exists a regular local subring $R_r\subset A$, which contains $C_r[E]$ 
and it is essentially of finite type over $C_r$. If $r=0$ then $A$ is an unramified extension of $C_0$ inducing an algebraic separable extension on residue fields and  we are done.

Suppose that $r>0$. Using again the  N\'eron Desingularization for the extension $C_{r-1}\subset B_r$ we see that $B_r$ is a filtered inductive union of regular local subrings essentially of finite type over $C_{r-1}$.
Then there exists a regular local subring $T_{r-1}$ of $B_r$ essentially of finite type over $C_{r-1}$ such that
\begin{enumerate}

\item $b_r$ belongs to a regular system of parameters $z_{r-1}$ of $T_{r-1}$ and there exist a regular local subring $T'_{r-1}$ of $B_r$ isomorphic to $(T_{r-1}[X]/(X^{e_{r-1}}-b_{r-1}))_{(z_{r-1},X)}$,

 \item $R_r$ is defined over  $T'_{r-1}$, that is there exists  a regular local subring $R_{r-1}\supset T'_{r-1}$ of $B_r$ such that $R_r\cong R_{r-1}\otimes_{T'_{r-1}} C_r$. 
\end{enumerate} 

 Applying by recurrence this procedure we find a regular local subring $R_0$ of $A$, which is essentially of finite type over $C_0$ and contains $C_0[E]$. This is enough.      
\hfill\  \end{proof}

\begin{Remark}{\em If $p\not\in \mm^2$ then the problem is easier (see \cite[Theorem 3.1]{P3}).}
\end{Remark}

\vskip 0.3 cm

\section{Regular local rings of unequal characteristic}

\vskip 0.3 cm

Let $(R,\mm,k)$ be a regular local ring of dimension $n$ and  $p=$char $k$. We suppose that $0\not =p\in \mm^2$.

\begin{Lemma} \label{l1} There exists a regular system of parameters $x=(x_1,\ldots,x_n)$ for $R$ such that
$(p,x_2,\ldots,x_n)$ is a system of parameters for $R$. For any such, the map  ${\bf Z}[X_2,\ldots,X_n]_{(p,X_2,\ldots,X_n)}\to R$ given by $X_i\to x_i$, $1<i\leq n$ is flat and induces a ramified extension of DVRs modulo $(x_2,\ldots,x_n)$.
\end{Lemma}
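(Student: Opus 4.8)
The plan is to prove the two assertions separately. For the existence of a regular system of parameters $x=(x_1,\dots,x_n)$ with $(p,x_2,\dots,x_n)$ a system of parameters, I build $x_2,\dots,x_n$ recursively, using that a regular local ring is a UFD (Auslander--Buchsbaum) together with prime avoidance; recall also that an element of $\mm\setminus\mm^2$ in a regular local ring is irreducible, hence prime, and that killing it yields again a regular local ring of dimension one less. Put $R_1:=R$. Suppose $x_2,\dots,x_j\in\mm$ have been found whose residues in $\mm/\mm^2$ are linearly independent and for which $R_j:=R/(x_2,\dots,x_j)$ is a regular local ring of dimension $n-j+1$ in which the image $p_j$ of $p$ is a nonzero nonunit. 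If $\dim R_j\ge 2$, factor $p_j$ in the UFD $R_j$ into primes $\rho_1,\dots,\rho_t$ (each generating a height-one prime ideal) and, by prime avoidance, pick $x_{j+1}\in\mm_{R_j}$ lying in neither $\mm_{R_j}^2$ nor any $(\rho_l)$ — possible since $\mm_{R_j}$ is contained in none of these finitely many proper ideals. Lift $x_{j+1}$ to $R$. Because $\mm_{R_j}/\mm_{R_j}^2=\mm/(\mm^2+(x_2,\dots,x_j))$ and $x_{j+1}$ is nonzero there, its residue modulo $\mm^2$ is independent of those of $x_2,\dots,x_j$; and since $x_{j+1}$ is prime in $R_j$ and lies in no $(\rho_l)$, it does not divide $p_j$, so $R_{j+1}:=R_j/(x_{j+1})$ is a regular local ring of dimension $n-j$ in which the image of $p$ is again a nonzero nonunit. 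Iterating until $\dim R_j=1$, i.e. $j=n$ (nothing to do if $n=1$), I obtain $x_2,\dots,x_n\in\mm$ with linearly independent residues in $\mm/\mm^2$; complete them to a regular system of parameters $x_1,x_2,\dots,x_n$ of $R$. Then $R_n=R/(x_2,\dots,x_n)$ is a DVR in which the image $\bar p$ of $p$ is nonzero, so $R/(p,x_2,\dots,x_n)=R_n/(\bar p)$ is Artinian and $(p,x_2,\dots,x_n)$ is $\mm$-primary, i.e. a system of parameters of $R$.

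Now fix any $x$ as in the statement and set $S:={\bf Z}[X_2,\dots,X_n]_{(p,X_2,\dots,X_n)}$. The chain of primes $0\subsetneq(p)\subsetneq(p,X_2)\subsetneq\cdots\subsetneq(p,X_2,\dots,X_n)$ of ${\bf Z}[X_2,\dots,X_n]$ shows that $S$ is a regular local ring of dimension $n$, with maximal ideal $\mm_S=(p,X_2,\dots,X_n)S$, regular system of parameters $p,X_2,\dots,X_n$, and residue field ${\bf F}_p$; the map $\phi\colon S\to R$, $X_i\mapsto x_i$, is local since $\mm_S$ maps into $(p,x_2,\dots,x_n)\subseteq\mm$. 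As $R$ is regular, hence Cohen--Macaulay, of dimension $n$, and $R/\mm_S R=R/(p,x_2,\dots,x_n)$ is Artinian (because $(p,x_2,\dots,x_n)$ is a system of parameters), we have $\dim R=n=\dim S+\dim(R/\mm_S R)$; so by miracle flatness — a local homomorphism from a regular local ring to a Cohen--Macaulay local ring is flat once these dimensions add up — $\phi$ is flat. (Equivalently: the regular sequence $p,X_2,\dots,X_n$ of $S$ maps to the regular sequence $p,x_2,\dots,x_n$ of the Cohen--Macaulay ring $R$, so the Koszul resolution of ${\bf F}_p=S/\mm_S$ over $S$ stays acyclic after $\otimes_S R$, giving $\Tor^S_1({\bf F}_p,R)=0$ and flatness by the local criterion.)

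Reducing $S$ modulo $(X_2,\dots,X_n)$ and $R$ modulo $(x_2,\dots,x_n)$, $\phi$ induces a homomorphism ${\bf Z}_{(p)}=S/(X_2,\dots,X_n)S\to V:=R/(x_2,\dots,x_n)$, where $V$ is a DVR since $x_2,\dots,x_n$ is part of a regular system of parameters of $R$. This map is injective: its kernel is a prime of the DVR ${\bf Z}_{(p)}$ not containing $p$ — indeed the image $\bar p$ of $p$ in $V$ is nonzero, since otherwise $(p,x_2,\dots,x_n)=(x_2,\dots,x_n)$ would be generated by $n-1$ elements, impossible for an $\mm$-primary ideal of the $n$-dimensional ring $R$ — hence the kernel is $(0)$. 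Finally, since $p\in\mm^2$ the nonzero element $\bar p$ lies in $\mm_V^2$, so it has valuation $\ge 2$ in $V$; as $p$ is a uniformizer of ${\bf Z}_{(p)}$, the ramification index of ${\bf Z}_{(p)}\subset V$ is $\ge 2$ and the extension of DVRs is ramified, as claimed. The only step requiring genuine care is the first — arranging a single regular system of parameters that simultaneously makes $(p,x_2,\dots,x_n)$ a system of parameters — and the recursive prime-avoidance construction inside the UFD $R$ handles it uniformly in the residue field $k$, in particular when $k$ is finite.
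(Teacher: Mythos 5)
Your proof is correct and follows essentially the same route as the paper: an inductive choice of $x_2,\ldots,x_n$ so that $p$ stays nonzero in each successive regular quotient, followed by the standard flatness criterion (miracle flatness / the local criterion via the Koszul complex) and the observation that $p\in\mm^2$ forces ramification index at least $2$. Your prime-avoidance argument in the UFD quotients is just a more carefully justified version of the paper's terse ``choose $x_n$ from the infinite set $z_n+\mm^2$ which does not divide $p$,'' so there is nothing to object to.
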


\begin{proof} The second statement follows from the first and the flatness criterion (see e.g. \cite[20.C]{M})
Suppose that $n>1$ and let $z=(z_1,\ldots,z_n) $ be a regular system of parameters of $R$.
Using induction on $n$ it is enough to choose $x_n$ from the infinite set $z_n+\mm^2$ which does not divide $p$.
\hfill\ \end{proof}    

For the next results we need some preparations.

A ring morphism $u:A\to A'$ of Noetherian rings has  {\em regular fibers} if for all prime ideals $p\in \Spec A$ the ring $A'/pA'$ is a regular  ring.
It has {\em geometrically regular fibers}  if for all prime ideals $p\in \Spec A$ and all finite field extensions $K$ of the fraction field of $A/p$ the ring  $K\otimes_{A/p} A'/pA'$ is regular.
A flat morphism of Noetherian rings $u$ is {\em regular} if its fibers are geometrically regular. If $u$ is regular of finite type then $u$ is called {\em smooth}.

The following theorem extends N\'eron's desingularization (see \cite{N}, \cite{KAP}) and it was useful to solve different problems concerning the projective modules over regular rings, or from the Artin Approximation Theory (see
 \cite{P}, \cite{P1},  \cite{P3}, \cite{P4}, \cite{S}).

\begin{Theorem} (General N\'eron Desingularization, Popescu \cite{P0}, \cite{P}, \cite{P'}, \cite{P1}, \cite{P2}, Andr\'e \cite{An}, Swan \cite{S})\label{gnd}  Let $u:A\to A'$ be a  regular morphism of Noetherian rings and $B$ an  $A$-algebra of finite type. Then  any $A$-morphism $v:B\to A'$   factors through a smooth $A$-algebra $C$, that is, $v$ is a composite $A$-morphism $B\to C\to A'$.
\end{Theorem}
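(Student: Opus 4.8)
This is the General N\'eron Desingularization theorem, so a complete argument is long; I would follow the Popescu--Andr\'e--Swan strategy, organised around the Jacobian ideal. To a finite-type presentation $B=A[Y_1,\dots,Y_n]/I$ one attaches the ideal $\mathrm{H}_{B/A}\subseteq B$ generated by all products $\Delta_{\mathbf f}\cdot c$, where $\mathbf f=(f_1,\dots,f_r)$ runs over finite sequences in $I$, $\Delta_{\mathbf f}$ over the $r\times r$ minors of the Jacobian matrix $(\partial f_i/\partial Y_j)$, and $c$ over the ideal $\bigl((\mathbf f)B:_B I\bigr)$; up to radical it is independent of the presentation, and $V(\mathrm{H}_{B/A})$ is exactly the locus where $B$ is not smooth over $A$. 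The reason to introduce it is that it makes the theorem self-improving: it suffices to produce a factorisation $B\to D\xrightarrow{w}A'$ of $v$ through a \emph{finite-type} $A$-algebra $D$ with $w(\mathrm{H}_{D/A})A'=A'$. Indeed, given such a $D$, write $1=\sum_{i=1}^{r}w(h_i)d_i'$ with $h_i\in\mathrm{H}_{D/A}$ and $d_i'\in A'$, and put $C=D[Z_1,\dots,Z_r]/(\sum_i h_iZ_i-1)$; then $w$ extends to $C\to A'$ via $Z_i\mapsto d_i'$, this factors $v$, and $C$ is smooth over $A$: the opens $D_C(h_i)$ cover $\Spec C$ since $\sum_i h_iZ_i=1$ holds in $C$, and on each one $C$ localises to a polynomial ring over $D[h_i^{-1}]$, which is smooth over $A$ because $h_i\in\mathrm{H}_{D/A}$ lands in the smooth locus of $D$ over $A$. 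So the whole theorem reduces to the \emph{Main Lemma}: for $A\to A'$ regular, $B$ finite type over $A$, and $v\colon B\to A'$, such a finite-type $D$ exists.

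I would then cut the Main Lemma down to a tractable situation. As $B$ is finitely presented the whole datum descends, so one may replace $A$ by an excellent Noetherian ring, then by a localisation of a finitely generated ${\bf Z}$-algebra, or --- after base change to the completion, which is regular because $A$ is excellent --- by a complete local ring, so that the Cohen structure theorem and a Newton--Tougeron implicit function lemma become available; dually one reduces $A'$ to a local ring, indeed a complete local domain, because a smooth factorisation can be glued from smooth factorisations over the localisations $A'_{\mathfrak q'}$ by the Zariski-local character of smoothness. The base case is then $A'$ Artinian --- which, after unravelling the fibres, is the case of a separable field extension and is handled directly --- together with the one-dimensional case, which is a ramification analysis of the sort carried out by hand above in Lemma \ref{lm}, Proposition \ref{pm} and Lemma \ref{l1}.

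For the inductive step one argues by induction on $\dim A'$, with an inner induction on a numerical measure of the ``size of the singularity'' of $(B,v)$ built from the number of defining equations and the codimension of the non-smooth locus in a minimal presentation. The driving input is that the fibres $A'\otimes_A\kappa(\mathfrak p)$ are geometrically regular over $\kappa(\mathfrak p)$, so on each fibre $v$ can already be smoothed; the real work is to lift this to $A'$ itself. The lifting is a Newton--Tougeron perturbation: an approximate solution of the defining equations that is non-singular modulo a high enough power of $\mathrm{H}$ can be perturbed to an exact solution over the complete local $A'$, which leaves $B$ replaced by some $D$ with strictly more of $\mathrm{H}_{D/A}$ mapping into the units of $A'$. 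Producing the approximate solution, and the Jacobian minors it needs, uses the separable-generation structure of the residue and function-field extensions and $p$-basis arguments, and in mixed characteristic a careful control of the image of $p$, exactly in the style of Lemma \ref{l1}. I expect the genuine obstacle --- the reason this theorem resisted for so long --- to be the \emph{inseparable} case: when those field extensions are not separably generated the naive Jacobian criterion gives nothing, and one must run Popescu's subtler induction, adjoining auxiliary variables and relations so as to desingularise one obstruction at a time while keeping the chosen invariant strictly decreasing; it is that bookkeeping that forms the heart of the proof. (Andr\'e's alternative route replaces this step with homological input: regularity of $A\to A'$ is recharacterised through the vanishing of Andr\'e--Quillen homology, which then produces the smooth factorisation directly.)
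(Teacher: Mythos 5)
The paper does not prove Theorem \ref{gnd} at all: it is quoted as a known result (General N\'eron Desingularization) with citations to Popescu, Andr\'e and Swan, and is used as a black box. So there is no in-paper proof to compare against, and your proposal has to be judged on its own terms as an attempted proof of the theorem itself.

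On those terms, your first paragraph is genuinely complete and correct: the definition of the Elkik ideal $H_{B/A}$ matches the one the paper recalls, the verification that $C=D[Z_1,\dots,Z_r]/(\sum_i h_iZ_i-1)$ is smooth over $A$ (cover by the $D_C(h_i)$, eliminate $Z_i$ on each to get a polynomial ring over the smooth $D_{h_i}$) is the standard and valid reduction, and it correctly reduces the theorem to the Main Lemma that $v$ factors through some finite-type $D$ with $w(H_{D/A})A'=A'$. From there on, however, what you give is a roadmap, not a proof. The Main Lemma is exactly the content of the theorem, and you only describe the shape of the argument: ``the real work is to lift this to $A'$ itself,'' ``one must run Popescu's subtler induction \dots it is that bookkeeping that forms the heart of the proof.'' The Newton--Tougeron perturbation step (the analogue of Proposition \ref{p2} in this paper, with its $d^3$ trick), the construction of the approximate solution from geometric regularity of the fibres, and above all the treatment of non--separably-generated residue extensions in characteristic $p$ and mixed characteristic are precisely where the theorem lives, and none of them is carried out. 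Two of your preliminary reductions are also stated more casually than they can be justified: replacing $A$ by a finitely generated ${\bf Z}$-algebra destroys regularity of $A\to A'$ in general (the descent has to be organised differently, through $B$ and the factorization, not through $u$), and the claim that a smooth factorisation over $A'$ can be ``glued from smooth factorisations over the localisations $A'_{\qq'}$'' is not a Zariski-gluing statement; the correct mechanism is the Noetherian induction on $\sqrt{v(H_{B/A})A'}$, enlarging this radical one minimal prime at a time (this is what the paper's Proposition \ref{p1} does in its special case). So: right framework, correct and complete opening reduction, but the core of the proof is missing rather than merely compressed.
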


  Let $A$ be a Noetherian ring, $E=A[Y]/I$, $Y=(Y_1,\ldots,Y_q)$. If $f=(f_1,\ldots,f_r)$, $r\leq q$ is a system of polynomials from $I$ then we can define the ideal $\Delta_f$ generated by all $r\times r$-minors of the Jacobian matrix $(\partial f_i/\partial Y_j)$.   After Elkik \cite{El} let $H_{E/A}$ be the radical of the ideal $\sum_f ((f):I)\Delta_fB$, where the sum is taken over all systems of polynomials $f$ from $I$ with $r\leq q$.
 $H_{E/A}$ defines the non smooth locus of $E$ over $A$.

\begin{Proposition} \label{p1} In the notation and hypotheses of Lemma \ref{l1}, let $E\subset R$ be a 
$C:=({\bf Z}[x_2,\ldots,x_n])_{(p,x_2,\ldots,x_n)}\cong ({\bf Z}[X_2,\ldots,X_n])_{(p,X_2,\ldots,X_n)}$-subalgebra of finite type. Suppose that $n>1$. Then the inclusion $v:E\to R$ factors through a finite type $C$-algebra $F$, let us say $v$ is the composite map $E\to F\xrightarrow{w} R$ such that $w(H_{F/C})R$ contains a power of $p$.
\end{Proposition}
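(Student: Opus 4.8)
The strategy is to realize $R$ as a suitable completion/limit situation over $C$ and then apply the General N\'eron Desingularization (Theorem \ref{gnd}) to the inclusion $v\colon E\to R$, tracking where the non-smooth locus lands. First I would pass to the $p$-adic (or $\mm$-adic) completion: since $(R,\mm)$ is regular local with $p\in\mm^2$, by Lemma \ref{l1} we may choose a regular system of parameters $x=(x_1,\dots,x_n)$ so that $(p,x_2,\dots,x_n)$ is also a system of parameters, and then $C=({\bf Z}[X_2,\dots,X_n])_{(p,X_2,\dots,X_n)}\to R$ is flat with the fiber over $p$ being the regular local ring $R/pR$ — wait, more precisely the closed fiber $R/(p,x_2,\dots,x_n)R$ is a field and the generic-fiber behavior is controlled. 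The key point is that the map $C\to \widehat{R}$ (completion) is regular: both are complete (after completing $C$ too, or rather after replacing $C$ by a suitable local-essentially-finite-type ring whose completion agrees) and one checks geometric regularity of the fibers using that $R$ is regular and the residue field extension is handled by the separability built into Lemma \ref{lm}/Proposition \ref{pm}. Actually here $n>1$ is used precisely so that $x_1$ (a regular parameter not dividing $p$) gives room: modulo $(x_2,\dots,x_n)$ we land in the DVR situation of Section 1, whose desingularization is already understood.

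Concretely: the inclusion $C\hookrightarrow R$ is a flat local homomorphism of Noetherian local rings whose closed fiber $R/(p,x_2,\dots,x_n)R=k_0$ is a field extension of ${\bf F}_p$, and — by the separability hypothesis carried through from Section 1 — this fiber is geometrically regular; flatness plus geometrically regular closed fiber of a map between Noetherian local rings (with $C$ regular) forces all fibers to be geometrically regular, i.e. $C\to R$ is a regular morphism. Now apply Theorem \ref{gnd} to $u\colon C\to R$ and the finite type $C$-algebra $E$: the inclusion $v\colon E\to R$ factors as $E\to F\xrightarrow{w} R$ with $C\to F$ smooth. Since $F/C$ is smooth, $H_{F/C}=F$, so trivially $w(H_{F/C})R=R\ni p$. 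But that is too cheap — the Proposition wants $F$ of finite type over $C$ (not merely smooth) with a controlled non-smooth locus, and the content is that we can take $F$ standard smooth with $w(H_{F/C})$ containing a power of $p$ rather than being the unit ideal; so the real work is to choose $F$ carefully. The plan is therefore: take the smooth $C$-algebra provided by Theorem \ref{gnd}, present it, and use the standard fact (Elkik, and the elementary description of $H_{E/C}$ recalled just before the Proposition) that a smooth algebra is locally standard smooth, hence after localizing we can arrange $H_{F/C}$ to contain any prescribed element inverted in the localization; inverting an element of $R$ that is a power of $p$ is exactly what is needed, and this is legitimate because the target $R$ is local with $p$ in its maximal ideal only up to the unit issue — one inverts $t$ where $p=x_1^s t$ or similar, forcing a power of $p$ into $w(H_{F/C})R$.

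The main obstacle is the last step: arranging that the non-smooth locus $H_{F/C}$, pulled back to $R$, contains a power of $p$ specifically (and not just some $\mm$-primary or arbitrary ideal). This requires combining the smoothness from Theorem \ref{gnd} with the explicit Jacobian description of $H_{F/C}$ and with the structural input from Lemma \ref{l1} that $p\in\mm^2$ but $p$ together with $n-1$ of the parameters generates an $\mm$-primary ideal — so modulo a power of $p$ everything becomes the well-understood smooth-over-a-DVR-times-polynomials situation of Section 1 and Proposition \ref{pm}. I would prove the $p$-power containment by working modulo $p^N$ for large $N$: there $R/p^N R$ is smooth over $C/p^N C$ along the relevant locus (using the parameter choice), so a suitable minor of the Jacobian of a presentation of $F$ is a unit modulo $p^N$, i.e. congruent to a unit, which forces a power of $p$ into $w(H_{F/C})R$. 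The bookkeeping of which minors to use, and verifying the regular-morphism hypothesis $C\to R$ rigorously (rather than hand-waving via completion), are the places where care is needed; the rest is a direct application of Theorem \ref{gnd} and the Elkik machinery recalled in the text.
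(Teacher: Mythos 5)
There is a genuine gap, and it sits at the center of your argument: the claim that $C\to R$ is a regular morphism is false, and with it the direct application of Theorem \ref{gnd} to $u\colon C\to R$ collapses. The standing hypothesis of this section is $0\neq p\in\mm^2$, which is exactly the condition under which $R/pR$ fails to be regular; the fiber of $C\to R$ over the prime $pC$ is a localization of $R/pR$ and is in general not (geometrically) regular --- e.g.\ when $p$ is a unit times $x_1^e$ with $e\geq 2$, that fiber contains the non-reduced ring $(R/pR)_{(x_1)}$, and $(x_1)$ does contract to $pC$. Your auxiliary claims are also incorrect: the closed fiber $R/(p,x_2,\ldots,x_n)R$ is not a field but a non-reduced Artinian local ring (again because $p\in\mm^2$, so $(p,x_2,\ldots,x_n)$ is a system of parameters but not a regular one), and ``flat with geometrically regular closed fiber implies regular morphism'' is not a valid principle here. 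Note also that if $C\to R$ \emph{were} regular the Proposition would be vacuous: one could take $F$ smooth, so $w(H_{F/C})R=R\ni 1$. The statement is phrased as it is precisely because regularity of $C\to R$ fails along $V(p)$, so the best one can do is confine the non-smooth locus of $F$ to $V(p)$, i.e.\ force a power of $p$ into $w(H_{F/C})R$. Your proposed repair (``work modulo $p^N$,'' where $R/p^NR$ is in no sense smooth or of finite type over $C/p^NC$) does not yield an argument.

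The paper's proof uses Theorem \ref{gnd} only \emph{after localizing away from $p$}, where the map does become regular, and then runs a Noetherian induction on the non-smooth ideal. Set $h_E=\sqrt{v(H_{E/C})R}$. If some minimal prime $q$ of $h_E$ does not contain $p$, then $C\to R_q$ is regular (its fibers live in characteristic zero), so by Theorem \ref{gnd} (or \cite[Lemma 8]{P2}) $v$ factors through a finite type $C$-algebra $F_1$ with $\sqrt{w_1(H_{F_1/C})R}$ strictly containing $q$. Since $R$ is Noetherian, this strictly increasing chain of ideals $h_{F_i}$ must stabilize, and it does so exactly when every minimal prime of $h_{F_r}$ contains $p$, i.e.\ when $p\in\sqrt{w_r(H_{F_r/C})R}$, which gives the asserted power of $p$ in $w(H_{F/C})R$. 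This localize-at-primes-avoiding-$p$-and-induct mechanism is the idea missing from your proposal.
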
    
\begin{proof} 
Let $q$ be a minimal prime ideal of $h_E=\sqrt{v(H_{E/C})R}$ which does not contain $p$. Then $C\to R_q$ is regular and using Theorem \ref{gnd}, or \cite[Lemma 8]{P2} we see that
 $v$ factors through a finite type $C$-algebra $F_1$,  let us say $v$ is the composite map $E\to F\xrightarrow{w_1} R$,  such that $h_{F_1}=\sqrt{w_1(H_{F_1/C})R}$    strictly contains $q$. Note that $w_1$ is not necessarily  injective. 
 Step by step we arrive in this way to some  new $F_r$ and $w_r:F_r\to R$ such that all minimal prime ideals 
 of   $h_{F_r}:=\sqrt{w_r(H_{F_r/C})R}$ contain $p$, that is, $p\in h_{F_r}$. We are done.
\hfill\ \end{proof}
 
We will need the next result, which is in fact \cite[Proposition  5]{P2} (see also \cite[Proposition 3]{ZKPP}) written in our special case.
 \begin{Proposition}\label{p2}
 Let $A$ and $A'$ be Noetherian  rings  and $u:A\to A'$ be a ring morphism. Suppose that $A'$ is local and let $E=A[Y]/I$, $Y=(Y_1,\ldots,Y_s)$, $f=(f_1,\ldots,f_r)$, $r\leq s$ be a system of polynomials from $I$, $(M_j)_{j\in [l]}$  some $r\times r$-minors    of the Jacobian matrix $(\partial f_i/\partial Y_{j'})$,  $(N_j)_{j \in [l]} \in  ((f):I)$ and set $P:=\sum _{j=1}^l N_jM_j$. Let  $v:E\to A'$ be an $A$-morphism. Suppose that
\begin{enumerate}
 \item{} there exists a non zero divisor  $d\in A$ such that $d\equiv P $ modulo $I$, which is also non zero divisor in $A'$, and

 \item{} there exist an  $A$-algebra $D$ of finite type and an $A$-morphism $\omega:D\to A'$ such that $d$ is a non zero divisor in $D$, $\Im v\subset \Im\omega +d^3A'$ and for ${\bar A}=A/(d^3)$  the map $\bar{v}={\bar A} {\otimes}_{A} v: \bar{E}=E/d^3E \to \bar{A}'=A'/d^3A'$ factors through $\bar{D}=D/d^3D$.
\end{enumerate}
 Then there exists a smooth $D$-algebra $D'$  such that $v$ factors through $D'$, let us say $v$ is the composite map $D\to D'\xrightarrow{w} A'$ and  $h_D=\sqrt{\omega(H_{D/A})A'}\subset h_{D'}=\sqrt{w(H_{D'/A})A'}$.
 \end{Proposition}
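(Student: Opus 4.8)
The statement is a special case of the Elkik--Popescu ``standard smooth algebra'' construction underlying the General N\'eron Desingularization (Theorem \ref{gnd}); the plan is to run that construction here, following \cite[Proposition 5]{P2} and \cite[Proposition 3]{ZKPP}. Conceptually, $(D,\omega)$ together with the factorization of $\bar v$ through $\bar D$ is an \emph{approximate} $D$-point of $E$ over $A'$, the identity $P=\sum_j N_jM_j\equiv d\pmod I$ is the ``Jacobian witness'' powering a Newton correction of this approximate point, and the hypothesis $N_j\in((f):I)$ is precisely what forces the corrected point to annihilate all of $I$, not merely $(f)$.

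First I would fix the data. Since $\bar v$ factors through $\bar D=D/d^{3}D$, pick $g=(g_1,\dots,g_s)\in D^{s}$ lifting the images of the $Y_i$ in $\bar D$; as this is a ring morphism, $f_j(g)\in d^{3}D$ for $1\le j\le r$, and more generally $\phi(g)\in d^{3}D$ for every $\phi\in I$. Because $\Im v\subset\Im\omega+d^{3}A'$ and $d$ is a non-zero-divisor in $A'$, there are uniquely determined $a_i\in A'$ with $v(Y_i)=\omega(g_i)+d^{3}a_i$; and since $v$ is an $E$-morphism, $f_j(v(Y))=0$, so $(v(Y_i))_i$ is an \emph{exact} solution of $f=0$ in $A'$ sitting $d^{3}$-close to the approximate one $g$. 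Now introduce variables $T=(T_1,\dots,T_s)$, set $Y_i':=g_i+d^{2}T_i$, and Taylor-expand: $f_j(Y')=f_j(g)+d^{2}\sum_i(\partial f_j/\partial Y_i)(g)\,T_i+d^{4}(\cdots)$. Writing $f_j(g)=d^{3}c_j$ with $c_j\in D$ and dividing out $d^{2}$ gives linearized equations $\ell_j=0$, $1\le j\le r$, where $\ell_j:=dc_j+\sum_i(\partial f_j/\partial Y_i)(g)\,T_i+d^{2}(\cdots)$; and evaluating $P\equiv d\pmod I$ at $g$ gives $\sum_j N_j(g)M_j(g)\equiv d\pmod{d^{3}D}$.

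This last congruence says that a suitable $D$-combination of the $r\times r$ Jacobian minors of $\ell$ equals $d$ times a unit modulo $d^{3}D$; so, adjoining one variable $Z$ together with the relation $1-Z\cdot(\text{that combination})$ and using Cramer's rule to solve $\ell=0$ for $r$ of the $T_i$ over $D$ in terms of the remaining $s-r$ of them and $Z$, one obtains a \emph{standard smooth} $D$-algebra $D'$ of relative dimension $s-r$ in which $f_j(Y')=0$. Extend $Y_i\mapsto Y_i'$ to a morphism $A[Y]\to D'$: it kills all of $I$, for if $\phi\in I$ then $N_j\phi\in(f)$, hence $N_j(Y')\phi(Y')=0$ in $D'$ for every $j$, and since $P-d\in I$ the relations just imposed together with the fact that $d$ is a non-zero-divisor in the flat $D$-algebra $D'$ force $\phi(Y')=0$; thus $E\to D'$ is well defined. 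Finally set $w\colon D'\to A'$ to be the $D$-algebra map with $T_i\mapsto d a_i$ and $Z$ sent to the value at $v(Y)$ of the inverted combination (a unit in $A'$ since $d$ is a non-zero-divisor there): the defining relations of $D'$ hold in $A'$ because $v(Y)$ solves $f=0$ exactly, and $w(Y_i')=\omega(g_i)+d^{3}a_i=v(Y_i)$, so $v$ is the composite $E\to D'\xrightarrow{w}A'$.

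For the last assertion, $D\to D'$ being smooth yields $H_{D'/A}\supseteq H_{D/A}D'$ by smooth base change of the non-smooth locus, whence $\omega(H_{D/A})A'\subseteq w(H_{D'/A})A'$ and $h_D\subseteq h_{D'}$. The main obstacle, and the heart of the matter, is the middle step: organizing the Newton correction so that every division is by a power of the non-zero-divisor $d$ — which is exactly why the hypotheses carry $d^{3}$ rather than $d$ — and checking that the corrected point annihilates all of $I$ through $N_j\in((f):I)$ and $P\equiv d\pmod I$, all while keeping $D'$ manifestly smooth over $D$; this is the bookkeeping carried out in \cite[Proposition 5]{P2}.
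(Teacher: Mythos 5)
You have correctly identified that the paper does not reprove this statement: it imports it from \cite[Proposition 5]{P2} (see also \cite[Proposition 3]{ZKPP}), with Remark \ref{r} relaxing the flatness of $D$ and $A'$ over $A$ to ``$d$ is a non zero divisor''. Your overall strategy (lift $g$ from $\bar D$, note $I(g)\subset d^3D$ and $v(Y)=\omega(g)+d^3a$, Taylor-expand, use $P\equiv d$ as the Jacobian witness, kill $I$ via $N_j\in((f):I)$, and get $h_D\subset h_{D'}$ from smoothness of $D\to D'$) is exactly the strategy of the cited proof. However, the middle step --- the only genuinely hard step, as you yourself note --- is sketched in a way that does not work. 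After the substitution $Y'=g+d^2T$ you obtain $\ell_j=dc_j+\sum_i(\partial f_j/\partial Y_i)(g)T_i+d^2(\cdots)$, and the only control you have is that the \emph{combination} $\sum_jN_j(g)\cdot(\text{$j$-th minor of }\partial\ell/\partial T)$ equals $d(1+d(\cdots))$. This does not let you ``use Cramer's rule to solve $\ell=0$ for $r$ of the $T_i$'': Cramer requires inverting a \emph{single} $r\times r$ minor, and no individual $M_j(g)$ is controlled. Worse, adjoining $Z$ with the relation $1-Z\cdot(\text{that combination})$ inverts $d$ in $D'$, so $D'$ could not map to $A'$ over $D$ unless $d$ were a unit in $A'$ --- which it is not in the intended application ($d$ is a power of $p\in\mm$). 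The ideal of maximal minors of $\ell$ containing $d(1+dx)$ does not make $D[T]/(\ell)$ smooth over $D$ after any legitimate localization. The cited construction avoids this by building the division by $d$ into the substitution itself: one forms the $s\times r$ matrix $H=\sum_jN_jG_j$ (each $G_j$ the adjugate of the submatrix defining $M_j$, padded by zeros), so that $(\partial f/\partial Y)\,H=P\,\mathrm{Id}_r\equiv d\,\mathrm{Id}_r$ modulo $I$, and substitutes $Y'=g+dH(g)T+\cdots$; composing the Jacobian with $H(g)$ then produces the explicit factor $d$ that cancels against $dc_j$, and the resulting system in the new variables is standard smooth after localizing only at an element congruent to $1$ modulo $d$ (a unit in the local ring $A'$). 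This is not bookkeeping one can wave at --- it is the reason the hypothesis is phrased with $\sum_jN_jM_j$ rather than a single minor.

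A secondary gap: your argument that $Y\mapsto Y'$ kills all of $I$ is circular as written. From $f(Y')=0$ and $N_j\phi\in(f)$ you get $P(Y')\phi(Y')=0$, and $P(Y')=d(1+dx)$ because $P-d\in I$ and $I(Y')\subset d^2D'$; concluding $\phi(Y')=0$ then uses that $d$ is a non zero divisor in $D'$ \emph{and} that $1+dx$ is invertible there, so the localization at $1+dx$ must be built into the definition of $D'$ (or one must run an induction on the $d$-adic order of $I(Y')$). Your final paragraph ($h_D\subset h_{D'}$ from smoothness of $D\to D'$ and $w|_D=\omega$) is fine.
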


\begin{Remark}\label{r} {\em Actually the proof from  \cite[Proposition  5]{P2} asks also for $D$  and $A'$ to be flat over $A$. In our case, it is only necessary to assume that $d$ is a non zero divisor in $D$ and $A'$. We can even assume that $d$ is not regular in $A'$, but in this case we should change $d^3$ by $d^{3c}$ for some $c$ with $(0:_{A'}d^c)=(0:_{A'}d^{c+1})$.}
\end{Remark}

Next theorem extends Theorem \ref{t0} in the case when $k$ is not separable generated over ${\bf F}_p$.

\begin{Theorem} \label{t2} Let $(A,\mm,k)$ be a Noetherian local ring, $s=(s_1,\ldots,s_m)$ some positive integers and $\gamma=(\gamma_1,\ldots,\gamma_m)$ a system of nilpotents of $A$. Suppose that $0\not =p\in \mm^2$, $R=A/(\gamma)$ is a DVR and  $A$ is a flat  $\Phi={\bf Z}_{(p)}[\Gamma]/(\Gamma^s)$-algebra, $\Gamma\to \gamma$ with $\Gamma=(\Gamma_1,\ldots,\Gamma_m)$ some  variables, and $(\Gamma^s)$ denotes the ideal $(\Gamma_1^{s_1},\ldots, \Gamma_m^{s_m})$. Then $A$ is a  filtered inductive limit of some  Noetherian local $\Phi$-algebras $(A_i)_i$ essentially of finite type with $A_i/\Gamma A_i$ regular local rings. In particular, a DVR   is a filtered inductive limit of   some  regular local rings, essentially of finite type over $\bf Z$.
\end{Theorem}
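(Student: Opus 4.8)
The plan is to reduce the assertion to a factorization problem and settle it with the General N\'eron Desingularization (Theorem \ref{gnd}) together with the lifting tool Proposition \ref{p2}. As $A$ is Noetherian and $\Phi$ is essentially of finite type over ${\bf Z}$, the ring $A$ is the filtered union of its finite type $\Phi$-subalgebras, so it suffices to prove: for every finite type $\Phi$-subalgebra $E\subset A$ there exist a Noetherian local $\Phi$-algebra $A'$, essentially of finite type over $\Phi$, with $A'/\Gamma A'$ a regular local ring, and a local $\Phi$-morphism $A'\to A$ through which $E\hookrightarrow A$ factors; carrying this out compatibly as $E$ grows then exhibits $A$ as the desired filtered inductive limit. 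Taking $m=0$, $\Phi={\bf Z}_{(p)}$, $A=R$ and $\gamma$ empty gives the statement for a discrete valuation ring of unequal characteristic with $p\in\mm^2$, and together with \cite[Theorem 3.1]{P3} (the cases $p\notin\mm^2$ and ``$R$ contains a field'') this yields the final (``In particular'') sentence; so it is enough to treat the general statement.

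The geometric point is that $\psi\colon\Phi\to A$ is flat but \emph{not} regular: its closed fibre is $A\otimes_\Phi{\bf F}_p=A/(p,\gamma)A=R/pR$, non-reduced because $p$ lies in the square of the maximal ideal of the DVR $R=A/(\gamma)$ (equivalently $p\in\mm^2$). After inverting $p$ it does become regular: $A[1/p]$ is Artinian local with reduction $\mathrm{Fr}(R)$, flat over $\Phi[1/p]={\bf Q}[\Gamma]/(\Gamma^s)$, with unique fibre the field $\mathrm{Fr}(R)$, geometrically regular over the perfect field ${\bf Q}$ (the nilpotent $\Gamma$-directions being harmless over ${\bf Q}$). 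Applying Theorem \ref{gnd} to $\Phi[1/p]\to A[1/p]$ and the finite type algebra $E[1/p]$, then returning to $A$ by the method of the proof of Proposition \ref{p1} --- especially simple here since $A$ has only the two primes $(\gamma)$ and $\mm$, the first not containing $p$ --- one gets a finite type $\Phi$-algebra $F$ and a factorization $E\to F\xrightarrow{w}A$ of the inclusion with $p^{N}\in\sqrt{w(H_{F/\Phi})A}$. Presenting $F=\Phi[Y]/I$ and choosing polynomial data $f,(M_j),(N_j)$ realizing $H_{F/\Phi}$, one arranges (after raising $p^{N}$ to a larger power and adjusting the presentation in the standard way) that $d:=p^{N'}$ satisfies $d\equiv P\pmod I$ with $P:=\sum_j N_jM_j$; since $p$ is a nonzerodivisor in ${\bf Z}_{(p)}$, hence in the ${\bf Z}_{(p)}$-flat ring $\Phi$, hence in the $\Phi$-flat ring $A$, $d$ is a nonzerodivisor in $\Phi$ and in $A$. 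This is hypothesis (1) of Proposition \ref{p2}, with base ring $\Phi$, algebra $F$ and target $A$.

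The remaining --- and main --- task is hypothesis (2): a finite type $\Phi$-algebra $D$ with a $\Phi$-morphism $\omega\colon D\to A$ such that $d$ is a nonzerodivisor in $D$, $\Im\omega+d^3A$ contains the image of $F$, and $F/d^3F\to A/d^3A$ factors through $D/d^3D$ --- and, decisively, such that $D/\Gamma D$ is regular at the prime over $\mm$, for only then does Proposition \ref{p2} (in the nonzerodivisor form of Remark \ref{r}) return a smooth $D$-algebra $D'$ with $D'/\Gamma D'$ regular there. Modulo $d^3=p^{3N'}$ the base becomes the finite ring $\overline\Phi=({\bf Z}/p^{3N'})[\Gamma]/(\Gamma^s)$, $A/d^3A$ is flat over it, and $R/p^{3N'}R$ is a truncated DVR of finite length, so the failure of regularity is now ``bounded'' --- exactly the situation the hypothesis ``$A$ flat over ${\bf Z}_{(p)}[\Gamma]/(\Gamma^s)$'', stronger than just ``$A/(\gamma)$ a DVR'', is meant to record. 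I would construct $D$ by induction on a complexity invariant of $(\Phi,A)$ capturing the ramification of $p$ and the inseparability of $k/{\bf F}_p$: the finitely many residue-field elements occurring in $E$ lie in a finitely generated, hence separably generated, subextension of $k/{\bf F}_p$, and the inseparable ones are absorbed by spreading them, with chosen $p^{e}$-th roots, into new variables and recording the ensuing square-zero relations by enlarging the nilpotent ring; this reduces the mod-$d^3$ problem to the case of strictly simpler ramification/inseparability data, the base case being the separably generated situation (Theorem \ref{t0}) or the case $p\notin\mm^2$ (\cite[Theorem 3.1]{P3}). Granting $D$, Proposition \ref{p2} yields a smooth $D$-algebra $D'$ through which $E\to A$ factors, with $h_D\subseteq h_{D'}$; as $D'/\Gamma D'$ is smooth over the regular ring $D/\Gamma D$ it is regular, and localizing $D'$ at the preimage of $\mm$ gives the $A'$ required in the first paragraph. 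Organizing the choices compatibly finishes the proof.

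The delicate step is the construction of $D$: approximating $E\to A$ modulo $d^3$ by a finite type model whose $\Gamma$-reduction is regular. It is precisely here that the nilpotent ring $\Phi={\bf Z}_{(p)}[\Gamma]/(\Gamma^s)$ is indispensable --- it is the bookkeeping device allowing one to trade the inseparability of $k/{\bf F}_p$, the obstruction left open by Theorem \ref{t0}, for extra nilpotent variables, carry out the induction, and descend to the already settled separably generated case.
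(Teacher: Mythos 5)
Your reduction to a factorization statement, the use of the regularity of $\Phi\to A_q$ for $p\notin q$ to reach $d=p^{N'}\in ((f):I)\Delta_f$, and the identification of hypothesis~(2) of Proposition~\ref{p2} as the crux all match the paper. But the crux itself --- the construction of the finite type $\Phi$-algebra $D$ with $D/\Gamma D$ regular local, $\Im v\subset\Im\omega+d^3A$, and $v$ factoring through $D/d^3D$ --- is not carried out: you announce an induction on an undefined ``complexity invariant capturing the ramification of $p$ and the inseparability of $k/{\bf F}_p$,'' with base case Theorem~\ref{t0}. That is a plan, not an argument, and it points in the wrong direction. Theorem~\ref{t0} produces regular local \emph{subrings} of a DVR under a separability hypothesis; it does not produce an algebra $D$ mapping to $A$ that approximates a \emph{prescribed} $F\to A$ modulo $d^3$, which is what Proposition~\ref{p2} demands. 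You also misread the role of the nilpotents $\Gamma$: they are not a device for trading inseparability for square-zero variables (nothing in the proof of Theorem~\ref{t2} enlarges the nilpotent ring); they are present so that Theorem~\ref{s} can apply Theorem~\ref{t2} to truncations $A/(x_i^{3e})$ in the induction on dimension.

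The paper's actual mechanism for $D$, which your sketch is missing, is the Cohen structure theorem combined with General N\'eron Desingularization applied to a \emph{complete} target. Let $\Lambda$ be the Cohen $\Phi$-algebra with residue field $k$; the map $\Phi\to\Lambda$ is flat with geometrically regular fibres \emph{whether or not} $k/{\bf F}_p$ is separably generated --- this is exactly what removes the separability hypothesis of Theorem~\ref{t0} --- so by Theorem~\ref{gnd} $\Lambda$ is a filtered limit of smooth $\Phi$-algebras $C$. Writing $p\equiv x^et_0$ mod $(\gamma)$, the completion of the DVR $A/(\gamma)$ is finite free over $\Lambda/\Gamma\Lambda$ with basis $1,x,\dots,x^{e-1}$, hence (by flatness of $A$ over $\Lambda$) $A/(d^3)$ is finite free over $\Lambda/(d^3)$ with the same basis; expanding the coefficients $t_j$ of $p=x^et_0+\sum_jt_j\gamma_j$ in this basis and descending them to a finite smooth level $C=(\Phi[U]/(g))_{ag'}$, one sets $D$ to be a localization of $C[X,U,Z]/(g-d^3Z,\ p-t_0'X^e-\sum_jt_j'\Gamma_j)$. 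The point is then \emph{visible}: $D/\Gamma D$ is regular local because $p$ and $X$ lie in a regular system of parameters of the essentially smooth ring $(D_1/(\Gamma))_{w^{-1}((x))}$, so killing $p-t_0'X^e$ preserves regularity. Without some such explicit model of $A/(d^3)$ over a smooth base there is no way to verify the regularity of $D/\Gamma D$, and your proposal does not supply one.
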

\begin{proof} 
 Let $x\in A$ be an element inducing a local parameter of $R$,  $E\subset A$ be  a finite type $\Phi$-algebra  and  $v:E\to A$  be the inclusion. Actually, we could simply take a morphism (not necessarily injective) $v:E\to A $ for some finite type $\Phi$-algebra $E$.
We may assume  that $p\equiv x^et_0$ modulo $(\gamma)$ for some $e\in {\bf N}$ and  $t_0\in A\setminus \mm$. Moreover, we may suppose that $p= x^et_0+\sum_{j=1}^m t_j\gamma_j$ for some 
$t_j\in A$, $j\in [m]$.  As in Proposition \ref{p1} we may assume  that $v(H_{E/\Phi})A$ contains a power  $b$ of $p$ because the map $\Phi\to A_q$ is a regular map  for all $q\in \Spec A$ with  $p\not \in q$. 

Following the proof of \cite[Theorem 2]{ZKPP},  we assume that
$b=\sum_{i=1}^qv(a_i)z_i$, where $z_i\in A$, $a_i\in H_{E/\Phi}$. Set $E_0=E[Z]/(f) $, where $ f=-b +\sum_{i=1}^qa_iZ_i\in E[Z]$, $Z=(Z_1,\ldots,Z_q)$,
and let $v_0:E_0\to A$ be the map of $E$-algebras given by $Z\to z$. Changing $E$ by $E_0$ we may assume that $b\in H_{E/\Phi}$.
 
Let $E\cong \Phi[Y]/I$, $Y=(Y_1,\ldots Y_m)$. Using \cite[Lemma 4]{ZKPP} we may change again $E$ to assume that a power $d$ of $b$ is in  $((f):I)\Delta_f$ for some $f=(f_1,\ldots,f_r)$, $r\leq s$ from $I$, where $\Delta_f$ denotes the ideal generated by the $r\times r$-minors of $(\partial f/\partial Y)$. Then $d$  has the form $d\equiv P= \sum_{i=1}^qM_iL_i\ \mbox{modulo}\ I$,
for some $r\times r$ minors $M_i$
of $(\partial f/\partial Y) $ and $L_i\in ((f):I)$.

 Let $(\Lambda,(p,\Gamma),k)$ be the (unique) Cohen $\Phi$-algebra with residue field $k$, that is  $\Lambda$ is complete  and the map $\Phi\to \Lambda$ is flat (even regular) (see e.g.  \cite[Theorem 83]{M}). $\Lambda$ is a filtered inductive limit of some Noetherian local $\Phi$-algebras $(C_j)_j$ essentially smooth (see Theorem \ref{gnd}). Then the completion of the DVR $A/(\gamma)$ is a finite free module over the DVR $\Lambda/\Gamma \Lambda$ with the base $\{1,x,\ldots, x^{e-1}\}$ and so $A/(\gamma,d^3)$ is a finite free module over $\Lambda/(\Gamma,d^3)$ with the same base.
Using \cite[(20.G)]{M} applied to $\Phi\to \Lambda\to A$ we get $A$ flat over $\Lambda$. 
   It follows that $A/(d^3)$ is a finite free module over $\Lambda/(d^3)$ with the same base and $t_j\equiv \sum_{i=0}^{e-1}t_{ji}x^i$ modulo $d^3$ for some $t_{ji}\in \Lambda$. Thus there exists a smooth ${\Phi}$-algebra $ C$ such that $ (t_{ji})_{ji}$ are contained in the image of the canonical map $ \psi: C\to \Lambda$, let us say $t_{ji}$ are the images of some $t'_{ji}\in C$. Let $\phi$ be the composite  map
    $$C[X]/(d^3)\to \Lambda[X]/(d^3)\to {\tilde A}=A/(d^3)$$
     induced by $\psi$ and $X\to x$.  Actually, $\tilde A$ is a filtered inductive limit of $\Phi$-algebras of type ${\tilde B}= C[X]/(d^3, p-t'_0X^e-\sum_{j=1}^mt'_j\Gamma_j)$, where $t'_j=\sum_{i=0}^{e-1}t'_{ji}X^i$ and $C$ is smooth over $\Phi$ and containing $(t'_{ji})$.

 We may assume that  $ C=({\Phi}[U]/( g))_{a g'}$, $U=
(U_1,\ldots,U_l)$ for a monic polynomial $ g$ (in $U_1$), $ a\in {\Phi}[U]$,  where  ${\bar g}'= \partial{\bar g}/\partial U_1$ (see e.g \cite[Theorem 2.5]{S}).    We may take $C$ such that the map ${\tilde v}:{\tilde E}:=E/d^3E\to {\tilde A}$ induced by $v$, factors through $\tilde B$, let us say $\tilde v$ is the composite map $ {\tilde E}\to {\tilde B}\xrightarrow{\tilde w} {\tilde A}$, the last map being the limit map.

 Choose a lifting  $u=(u_i)_{1\leq i\leq l}$ in $A$ of $({\tilde w}(U_i))_{1\leq i\leq l}$. We have $g(u)=d^3z$ for some $z\in A$. Set $D_1=C[X,U,Z]/(g-d^3Z)$ and let $w:D_1\to A$  be given by $(X,U,Z)\to (x,u,z)$. Clearly,   $D_2=(D_1/(\Gamma))_{w^{-1}((x))}$ is a regular local ring being essentially smooth over a polynomial ring over $\bf Z$ and $p,X$ are contained in a regular system of parameters of $D_2$. Then $D_2/(p-t_0'X^e)$ is a regular local ring and denote the map 
 $$D=D_1/(p-t_0'X^e-\sum_{j=1}^mt'_j\Gamma_j)\to A$$
  induced by $w$ also by $w$.

 We have $\Im v\subset \Im w + d^3A$ and $v$ factors modulo $d^3$ through $w$. Since $d$ is a non zero divisor in  $D$, $A$ we may apply 
 Proposition \ref{p2} (see also Remark \ref{r}) for $\Phi\to A$, $E$,   $D$ and $\omega=w$. We get an essentially smooth $D$-algebra $D'$   such that $v$ factors through $D'$. Since $D/\Gamma D$ is regular local  we get $D'/\Gamma D'$ regular too.  This is enough using e.g. \cite[Lemma 1.5]{S} (see below), $\mathcal S$ being the set of the regular local $\Phi$-algebras.  
\hfill\ \end{proof}

\begin{Lemma}
Let $\mathcal S$ be a class of finitely presented $\Phi$-algebras. Let  $A$ be a $\Phi$-algebra. Then the following statements are equivalent: 

(1) $A$ is a filtered inductive limit of algebras from $\mathcal S$,

(2) If $E$ is a  finitely presented $\Phi$-algebra and  $v:E\to A$ is a morphism  
then $v$ factors through an algebra from $\mathcal S$.
\end{Lemma}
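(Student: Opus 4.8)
The plan is to treat the two implications separately, the real content being $(2)\Rightarrow(1)$. For $(1)\Rightarrow(2)$ I would use only the fact that a finitely presented $\Phi$-algebra is a compact object in the category of $\Phi$-algebras. Writing $A=\varinjlim_{i\in I}A_i$ with $I$ filtered and all $A_i\in\mathcal S$, and $E=\Phi[Y_1,\dots,Y_n]/(f_1,\dots,f_m)$ with a $\Phi$-morphism $v\colon E\to A$, I would first lift the finitely many elements $v(\bar Y_j)$ to a common stage $A_i$ (possible by filteredness), getting a $\Phi$-morphism $\Phi[Y]\to A_i$; since the finitely many images of the $f_k$ vanish in $A$, they already vanish in some later stage $A_{i'}$, so the map descends to $E\to A_{i'}$ and $v$ is the composite $E\to A_{i'}\to A$ with $A_{i'}\in\mathcal S$.

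For $(2)\Rightarrow(1)$ I would build the index category $I$ whose objects are pairs $(B,\phi)$ with $B\in\mathcal S$ and $\phi\colon B\to A$ a $\Phi$-morphism, and whose arrows $(B,\phi)\to(B',\phi')$ are the $\Phi$-morphisms $\psi\colon B\to B'$ with $\phi'\psi=\phi$. (A short set-theoretic remark is needed: finitely presented $\Phi$-algebras form a set up to isomorphism and each $\Hom_\Phi(B,A)$ is a set, so one may take $\mathcal S$ to be such a set of representatives, and then $I$ is small.) The key point is that $I$ is filtered, and hypothesis $(2)$ is invoked each time to stay inside $\mathcal S$: $I$ is nonempty because the structure map $\Phi\to A$ (with $\Phi$ finitely presented over itself) factors through some $B\in\mathcal S$; two objects $(B_1,\phi_1),(B_2,\phi_2)$ admit an object receiving arrows from both, because $B_1\otimes_\Phi B_2$ is finitely presented and the multiplication map $B_1\otimes_\Phi B_2\to A$ factors through some $C\in\mathcal S$, into which both $B_i$ map; and two parallel arrows $\psi_1,\psi_2\colon(B,\phi)\rightrightarrows(B',\phi')$ become equal after composing with an arrow $(B',\phi')\to(C,\chi)$, obtained by applying $(2)$ to the finitely presented quotient $B'\big/\bigl(\psi_1(y_i)-\psi_2(y_i)\bigr)\to A$ (well defined since $\phi'\psi_1=\phi'\psi_2$), where $y_1,\dots,y_r$ generate the $\Phi$-algebra $B$.

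It then remains to check that the canonical $\Phi$-algebra map $\alpha\colon\varinjlim_{(B,\phi)\in I}B\to A$ induced by the $\phi$'s is an isomorphism. For surjectivity, given $a\in A$ I would apply $(2)$ to $\Phi[Y]\to A$, $Y\mapsto a$, factoring it through some $B\in\mathcal S$, so that $a$ lies in the image of the corresponding $\phi$. For injectivity, if a colimit class is represented by $b\in B$ with $\phi(b)=0$, I would apply $(2)$ to the finitely presented algebra $B/(b)\to A$ to obtain $(C,\chi)\in I$ together with an arrow $B\to B/(b)\to C$ of $I$ killing $b$, whence the class is zero. Hence $A=\varinjlim_I B$ exhibits $A$ as a filtered inductive limit of algebras from $\mathcal S$.

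The only delicate points are the set-theoretic normalization making $I$ small and the verification that $I$ is filtered; within the latter, the step meriting attention is equalizing parallel arrows, where one must observe that the relevant coequalizer of maps out of a finitely presented algebra is again finitely presented, so that $(2)$ applies. I expect this filteredness check, rather than the identification of the colimit (which is routine once $I$ is in place), to be the main obstacle, though it is entirely standard bookkeeping about filtered colimits of rings.
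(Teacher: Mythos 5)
Your proof is correct and complete; it is the standard argument (compactness of finitely presented $\Phi$-algebras for $(1)\Rightarrow(2)$, and the filtered category of pairs $(B,\phi)$ with the tensor-product and coequalizer checks for $(2)\Rightarrow(1)$), including the two genuinely delicate points you flag, namely smallness of the index category and the equalization of parallel arrows. The paper itself states this lemma without proof, merely citing Swan's Lemma 1.5, whose argument is essentially the one you give, so your proposal agrees with the intended source.
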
\

The following theorem is a positive  answer  of a question of Swan \cite{Mu}.

\begin{Theorem} \label{s}  Let $(A,\mm,k)$ be a Noetherian local ring, $s=(s_1,\ldots,s_m)$ some positive integers and $\gamma=(\gamma_1,\ldots,\gamma_m)$ a system of nilpotents of $A$. Suppose that $0\not =p\in \mm^2$, $R=A/(\gamma)$ is a regular local ring and  $A$ is a flat  $\Phi={\bf Z}_{(p)}[\Gamma]/(\Gamma^s)$-algebra, $\Gamma\to \gamma$ with $\Gamma=(\Gamma_1,\ldots,\Gamma_m)$ some  variables, and $(\Gamma^s)$ denotes the ideal $(\Gamma_1^{s_1}\cdots \Gamma_m^{s_m})$.
 Then $A$ is a   filtered inductive limit of some  Noetherian local $\Phi$-algebras $(F_i)_i$ essentially of finite type with $F_i/\Gamma F_i$ regular local rings. In particular, a regular local ring $(R,\mm,k)$ of unequal characteristic  is a filtered inductive limit of regular local rings essentially of finite type over $\bf Z$.
\end{Theorem}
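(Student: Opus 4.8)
\textbf{Proof strategy for Theorem \ref{s}.}
The plan is to reduce the general regular local case to the DVR case already settled in Theorem \ref{t2}, using induction on $n=\dim R$ together with the General N\'eron Desingularization (Theorem \ref{gnd}) and Proposition \ref{p2}. By Lemma \ref{l1} (applied to the regular local ring $R=A/(\gamma)$, which has $0\neq p\in\mm_R^2$ since $p\in\mm^2$ and $\gamma$ is nilpotent) we may choose a regular system of parameters $x=(x_1,\dots,x_n)$ of $R$ so that $(p,x_2,\dots,x_n)$ is also a system of parameters; lift the $x_i$ to elements of $A$. Setting $C=({\bf Z}[x_2,\dots,x_n])_{(p,x_2,\dots,x_n)}\cong({\bf Z}[X_2,\dots,X_n])_{(p,X_2,\dots,X_n)}$ and $C'=C[\Gamma]/(\Gamma^s)$, we get a flat map $C'\to A$ whose fibre modulo $(x_2,\dots,x_n,\Gamma)$ is the ramified DVR extension $C\to R/(x_2,\dots,x_n)$ in unequal characteristic. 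Here I would pass to the quotient $\bar A=A/(x_2,\dots,x_n)A$: it is a Noetherian local ring, flat over $\bar\Phi={\bf Z}_{(p)}[\Gamma]/(\Gamma^s)$ (one still needs to check flatness survives the quotient, which is where Lemma \ref{l1} and \cite[20.C]{M} enter), and $\bar A/(\gamma)$ is a DVR, so Theorem \ref{t2} applies to $\bar A$.

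For the inductive step I would use exactly the mechanism in the proof of Theorem \ref{t0}: given a finite type $\Phi$-algebra $E\subset A$ with inclusion $v:E\to A$, use Theorem \ref{gnd} over the regular base $C'$ (as in Proposition \ref{p1}, noting $\Phi\to A_q$ is regular whenever $p\notin q$) to arrange that $v$ factors as $E\to F\xrightarrow{w}A$ with $w(H_{F/C'})A$ containing a power of $p$; the ideal $(x_2,\dots,x_n)$ is already part of the non-smooth-locus bookkeeping through the fibre structure. Then I would apply the DVR result Theorem \ref{t2} to $\bar A$ to factor the induced morphism $\bar v:\bar E\to\bar A$ through a Noetherian local $\bar\Phi$-algebra $\bar D$ essentially of finite type with $\bar D/\Gamma\bar D$ regular, lift $\bar D$ to a finite type $C'$-algebra $D$ together with a compatible morphism $w:D\to A$ (adjoining variables $X_2,\dots,X_n\to x_2,\dots,x_n$ and killing the appropriate relations so that $D/\Gamma D$ remains regular local with $p,X_2,\dots,X_n$ part of a regular system of parameters), and finally invoke Proposition \ref{p2} (with $d$ a suitable power of $p$, a non-zero-divisor in $D$ and in $A$, and $\Im v\subset\Im w+d^3A$) to obtain an essentially smooth $D$-algebra $D'$ through which $v$ factors; then $D'/\Gamma D'$ is regular local, and the criterion of \cite[Lemma 1.5]{S} finishes the proof, with $\mathcal S$ the class of regular local $\Phi$-algebras. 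The final ``in particular'' clause is the case $\gamma=0$, $m=0$, $\Phi={\bf Z}_{(p)}$.

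The main obstacle I expect is the passage between $A$ and the quotient $\bar A=A/(x_2,\dots,x_n)A$: one must verify that the nilpotent structure, the flatness over $\bar\Phi$, and the hypothesis $p\in\bar\mm^2$ are all preserved (so Theorem \ref{t2} genuinely applies), and then—more delicately—lift the factorization of $\bar v$ back to a factorization of $v$ over $A$ itself. This lifting is not automatic: it requires that the $C'$-algebra $D$ produced upstairs be chosen so that $X_2,\dots,X_n$ map into a regular system of parameters of $D/\Gamma D$ and that the relation $p-t_0'X^e-\sum_j t_j'\Gamma_j$ interacts correctly with the $x_i$, exactly as in the one-variable manipulations in the proof of Theorem \ref{t2}. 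Once the bookkeeping of non-zero-divisors (the power $d$ of $p$, possibly replacing $d^3$ by $d^{3c}$ as in Remark \ref{r}) is set up correctly, Proposition \ref{p2} does the real work; the rest is the same descent-along-$\mathcal S$ argument used for DVRs.
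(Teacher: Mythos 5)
Your overall strategy (reduce to the DVR case of Theorem \ref{t2} by induction on $n$, then glue with Proposition \ref{p2}) has the right general shape, but the specific reduction you propose --- passing to $\bar A=A/(x_2,\dots,x_n)A$ and applying Theorem \ref{t2} there --- breaks down at exactly the point you flag as ``delicate'', and the gap is not merely bookkeeping. Proposition \ref{p2} glues along the quotient $A/d^3A$, where $d$ is the element controlling the non-smooth locus of $E$ (a power of $p$, after the reduction of Proposition \ref{p1}); its hypothesis (2) demands $\Im v\subset\Im\omega+d^3A$ and a factorization of $v$ \emph{modulo $d^3$}. Applying Theorem \ref{t2} to $\bar A$ only produces a factorization of $v$ modulo the ideal $(x_2,\dots,x_n)A$, which is incomparable with $(d^3)=(p^{3c})$: no power of $p$ lies in $(x_2,\dots,x_n)$ and conversely. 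So the data you obtain downstairs cannot be fed into Proposition \ref{p2}, and this is precisely why the paper never quotients by $(x_2,\dots,x_n)$.

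What the paper actually does is analyze $A/(d^3)$ directly: it factors $p\equiv t_0\prod_i z_{1i}^{\alpha_{1i}}$ into irreducibles of the UFD $R$, studies the semilocal ring $S^{-1}\tilde A$ with $S=A\setminus\bigcup_i(z_{1i},\gamma)A$ via Cohen algebras over the residue fields $\mathrm{Fr}(A/(z_{1i},\gamma))$ --- so the role your $\bar A$ was meant to play is taken by the \emph{localizations} at the minimal primes of $(p,\gamma)$, not by a quotient --- and then runs a Noetherian induction on the non-smooth locus, enlarging the algebras $\tilde P_i$ prime by prime (height $2$, then $3$, \dots) until the remaining non-smooth locus is $\mm$-primary. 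Only at that stage does a power $x_n^{e'}$ of a single parameter lie in the non-smooth-locus ideal, and only then can one quotient by $x_n^{3e'}$, invoke the induction hypothesis on $A/(x_n^{3e'})$ (treating $x_n$ as a new nilpotent $\Gamma$-variable, which is the reason Theorems \ref{t2} and \ref{s} carry the nilpotents $\gamma$ in their statements at all), and glue with Proposition \ref{p2}. Your proposal skips this entire mechanism and in particular gives no way to verify hypothesis (2) of Proposition \ref{p2}; to repair it you would have to replace the quotient by $(x_2,\dots,x_n)$ with the analysis at the primes $(z_{1i},\gamma)$ and the Noetherian induction on the non-smooth locus.
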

\begin{proof}  Let  $x_1,\ldots,x_n\in A$, $n=\dim A$ be a system of elements defining a regular system of parameters in R, $E\subset A$   a finite type $\Phi$-algebra  and  $v:E\to A$  be the inclusion. Suppose that $n>1$ by Theorem \ref{t2} and let $p\equiv t_0\Pi_{i=1}^{l_1}z_{1i}^{\alpha_{1i}}$ modulo $(\gamma)$ for some $\alpha_{1i}\in {\bf N}$, $t_0\in A\setminus \mm$ and some  elements $z_{1i}$ of $A$ inducing irreducible elements in $R$, with $(z_{1i},\gamma)\not =(z_{1i'},\gamma)$ for $i\not =i'$, $R$ being a unique factorization domain.  Moreover, we may suppose that $p=  t_0\Pi_{i=1}^{l_1}z_{1i}^{\alpha_{1i}}+\sum_{j=1}^m t_j\gamma_j$ for some 
$t_j\in A$, $j\in [m]$.

As in Proposition \ref{p1} we may assume  that $v(H_{E/\Phi})A$ contains a power  of $p$ and as in Theorem \ref{t2} we may assume that $E\cong \Phi[Y]/I$, $Y=(Y_1,\ldots Y_m)$ and  that a power $d$ of $p$ is in  $((f):I)\Delta_f$ for some $f=(f_1,\ldots,f_r)$, $r\leq s$ from $I$. Then $d$  has the form $d\equiv P= \sum_{i=1}^qM_iL_i\ \mbox{modulo}\ I$,
for some $r\times r$ minors $M_i$
of $(\partial f/\partial Y) $ and $L_i\in ((f):I)$.

Consider as in Theorem \ref{t2}
 the  Cohen $\Phi$-algebra $(\Lambda_{1i} ,(p,\Gamma), \mbox{Fr}(A/(z_{1i},\gamma)))$ with residue field  Fr$(A/(z_{1i},\gamma))$, that is  $\Lambda_{1i}$ is complete  and the map $\Phi\to \Lambda_{1i}$ is flat (even regular). $\Lambda_{1i}$ is a filtered inductive limit of some Noetherian local $\Phi$-algebras $(C_{ij})_j$ essentially smooth (see Theorem \ref{gnd}). Then the completion of the DVR  $R_{(z_{1i})}$ is a finite free module over the DVR $\Lambda_{1i}/\Gamma \Lambda_{1i}$ with the base $\{1,z_{1i},\ldots, z_{1i}^{\alpha_{1i}-1}\}$ and so $(A/(\gamma,d^3))_{(z_{1i})}$ is a finite free module over $\Lambda_{1i}/(\Gamma,d^3)$ with the same base.  It follows that $(A/(d^3))_{(z_{1i},\gamma)}$ is a finite free module over $\Lambda_{1i}/(d^3)$ with the same base. 
 
  Set $S=A\setminus \cup_{i=1}^{l_i}(z_{1i},\gamma)A$ and ${\tilde A}:=A/(d^3)$, ${\tilde{\Phi}}:={\Phi}/(d^3)$. Then $S^{-1}{\tilde A} $ is a product of the local Artinian rings ${\tilde A}_{(z_{1i},\gamma)}$ which are finite free modules over $({\tilde \Lambda}_{1i})=\Lambda_{1i}/(d^3)$, the base of $S^{-1}{\tilde A} $ over ${\tilde \Lambda}_1:= \Pi_{i=1}^{l_1} {\tilde \Lambda}_{1i}$ is given by 
$$\{z_{11}^{\epsilon_{11}}\cdots 
z_{1l_1}^{ \epsilon_{1l_1}}: 0\leq \epsilon_{11}<\alpha_{11},\ldots, 0\leq \epsilon_{1l_1}<\alpha_{1l_1}\}.$$ 
Let $t_j\equiv \sum t_{j\epsilon_{11}\ldots \epsilon_{1l_1}} z_{11}^{\epsilon_{11}}\cdots z_{1l_1}^{\epsilon_{1l_1}}$ modulo $(d^3)$ for some $t_{j\epsilon_{11}\ldots \epsilon_{1l_1}}\in {\tilde \Lambda}_1$.

Then we may choose some smooth  $\tilde{\Phi}$-algebras of type $({\tilde C}_{1i})_i$ such that the limit maps give a map ${\tilde \psi}:{\tilde C}_1:=\Pi_{i=1}^{l_1}{\tilde C}_{1i}\to {\tilde \Lambda}_1$ whose image contains $( t_{j\epsilon_{11}\ldots \epsilon_{1l_1}})$, let us say  $ t_{j\epsilon_{11}\ldots \epsilon_{1l_1}}={\tilde \psi}(  t'_{j\epsilon_{11}\ldots \epsilon_{1l_1}}) $
for some 
 $ t'_{j\epsilon_{11}\ldots \epsilon_{1l_1}}\in {\tilde C}_1$.
 Let ${\tilde\phi}_1:{\tilde C}_1[Z_{11},\ldots,Z_{1l_1}]\to S^{-1}{\tilde A}$ for some new variables $Z_{11},\ldots,Z_{1l_1}$ be the map induced by $\tilde{\psi}$ and $Z_{1i}\to z_{1i}$. 
Note that 
$$t'_j=\sum t'_{j\epsilon_{11}\ldots \epsilon_{1l_1}} Z_{11}^{\epsilon_{11}}\cdots Z_{1l_1}^{\epsilon_{1l_1}}\in {\tilde C}_1[Z_{11},\ldots,Z_{1l_1}]$$
 is mapped to $t_j$ modulo $d^3$ and 
 $S^{-1}({\tilde A})$ is a filtered inductive limit of localizations of $\tilde{\Phi}$-algebras of finite type
  $${\tilde B}_1:={\tilde C}_1[Z_{11},\ldots,Z_{1l_1}]/( p-t_0'Z_{11}^{\alpha_{11}}\cdots Z_{1l_1}^{\alpha_{1l_1}}-\sum_{j\in [m]}t'_j\Gamma_j).$$
   Thus the composite map $E\xrightarrow{v} A\to S^{-1}{\tilde A}$ factors through  such an algebra ${\tilde B}_1$.
  
   Clearly,
${\tilde C}_1$ is a localization of a finite type $\tilde{\Phi}$-algebra ${\tilde C}$ with ${\tilde \phi}_1({\tilde C})\subset {\tilde A}$ and ${\tilde \phi}_1(H_{{\tilde C}/{\tilde{\Phi}}})\not\subset \cup_{i=1}^{l_1}(z_{1i},\gamma){\tilde A}$. Unfortunately we cannot assume that  $t_j'\in {\tilde C}[Z_{11},\ldots,Z_{1l_1}]$ but we may suppose that there exist
$c\in {\tilde C}$ and $t_j''\in {\tilde C}[Z_{11},\ldots,Z_{1l_1}]$ such that $ct_j'=t_j''$ and 
${\tilde \phi}_1(c)\in {\tilde A}\setminus  \cup_{i=1}^{l_i}(z_{1i},\gamma){\tilde A}$. It follows that 
${\tilde B}_1$ is a localization of 
$${\tilde B}={\tilde C}[T',Z_{11},\ldots,Z_{1l_1}]/(cT_0'-t_0'',\ldots,cT_m'-t_m'', p-T_0'Z_{11}^{\alpha_{11}}\cdots Z_{1l_1}^{\alpha_{1l_1}}-\sum_{j\in [m]}T'_j\Gamma_j),$$ $T'=(T'_0,\ldots,T'_m)$ being some new variables, and we may consider the map ${\tilde B}\xrightarrow{{\tilde \phi}} {\tilde A}$  given  by $T'\to (t_0,\ldots,t_m)$ and $Z_{1j}\to z_{1j}$ modulo $d^3$.

Moreover, we may assume that the composite map $E\xrightarrow{v} A\to {\tilde A}$ factors through $\tilde \phi$. The map ${\tilde G}_1:={\tilde C}[T',Z_{11},\ldots,Z_{1l_1}]/(cT_0'-t_0'',\ldots, cT'_m-t''_m)\xrightarrow{{\tilde w}_1} {\tilde A}$ given by $T'\to (t_0,\ldots,t_m)$, $Z_{1j}\to z_{1j}$ modulo $d^3$
satisfies ${\tilde w}_1(H_{{\tilde G}_1/{\tilde {\Phi}}})\not \subset  \cup_{i=1}^{l_i}(z_{1i},\gamma){\tilde A}$. Clearly, 
$${\tilde B}= {\tilde G}_1/(p-T_0'Z_{11}^{\alpha_{11}}\cdots Z_{1l_1}^{\alpha_{1l_1}}-\sum_{j\in [m]}T'_j\Gamma_j)).$$

 We show by induction on $n$, that there exist an essentially smooth local $\Phi$-algebra $(D,\qq)$ with $p$  from a system of  regular parameters modulo $(\Gamma)$ of $D/\Gamma D$ and $b\in \qq^2$ such that $v$ factors through a smooth $D/(p-b)$-algebra $D'$. This is enough because $D/(p-b,\Gamma)$ is    still regular and so $D'/\Gamma D'$ is too. The case $n=1$ is done in Theorem \ref{t2}.

\vskip 0.5 cm
{\bf Case $n=2$}

We may suppose that $x_2$ is regular in $\tilde{A}$. If  ${\tilde w}_1'(H_{{\tilde G}_1/{\tilde{\Phi} }}){\tilde A}={\tilde A}$ then we may change ${\tilde G}_1$ to be essentially smooth over $\tilde{\Phi}$ (see e.g. \cite[Lemma 2.4]{P0}). Thus we may consider that  ${\tilde G}_1/\Gamma {\tilde G}_1$ and hence also 
${\tilde B}/\Gamma {\tilde B}$ are regular local rings which ends the proof using Proposition \ref{p2} as in Theorem \ref{t2}. 

Otherwise,
the ideal ${\tilde w}_1'(H_{{\tilde G}_1/{\tilde{\Phi} }}){\tilde A}$ is $\mm$-primary and a power $x_2^e$ of $x_2$ is in\\
 ${\tilde w}_1'(H_{{\tilde G}_1/{\tilde{\Phi} }}){\tilde A}$. Moreover as above we may assume that $x_2^e\equiv \sum_i M'_i N'_i$ modulo $I'$ for some $M'_i,N'_i, I'$ associated to ${\tilde G}_1$, 
which has the form  ${{\tilde{ \Phi}}}[Y']/{\tilde I}$, $Y'=(Y'_{1},\ldots,Y'_{m'})$.

Note that  $\bar A:=A/(x_2^{3e})$ is a flat $\bar{\Phi}=\Phi[X_2]/(X_2^{3e})$-algebra, $X_2\to x_2$.   By Theorem \ref{t2} applied to $(\Gamma,X_2)$, $(s,3e)$ we see that   $\bar A:=A/(x_2^{3e})$ is a filtered inductive limit of some Noetherian local $\bar {\Phi}$-algebras of type $ G/(p-b)$, where $ (G,\aa)$ is essentially smooth over  $\bar{\Phi}$, $p$ is from   a regular  system of parameters   modulo $(\Gamma,X_2)$ of $G/(\Gamma ,X_2)G$ and $b\in \aa^2  $. Let $\bar{\omega}:G/(p-b)\to \bar{A}$ be the limit map. We may suppose that the composite map ${\tilde G}_1\xrightarrow{{\tilde w}_1} {\tilde A}\to {\tilde A}/(x_2^{3e})={\bar A}/(d^3)$ factors through ${\tilde \Phi}\otimes_{\Phi}{\bar \omega}$.  

We claim that there exists a Noetherian local $\Phi$-algebra $D$ such that $D/(\Gamma)$ is regular and  $\bar \omega $ factors through $D/X_2^{3e}D$. Indeed,  $G$ is a localization of a $\bar \Phi$-algebra of type $({\bar \Phi}[W]/(g))_{\partial g/\partial W_1}$ for some variables $W=(W_1,\ldots,W_{\lambda})$ and $g$ a monic polynomial in $W_1$ since $G$ is essentially smooth over $\bar \Phi$. Take $\epsilon$   in $A$ lifting ${\bar \omega }(W)$ and $\delta,\delta'\in A$  such that $g(\epsilon)=x_2^{3e}\delta$, $(p-b)(\epsilon)=x_2^{3e}\delta'$ and  let $D$ be the corresponding  localization of 
$$(\Phi[X_2,W,\Delta, \Delta']/(g-X_2^{3e}\Delta, p-b-X_2^{3e}\Delta'))_{\partial g/\partial W_1},$$
 $\Delta,\Delta '$ being new variables. Let $\rho:D\to A$ be the map given  by $W\to g(W)$, $\Delta\to \delta$, $\Delta'\to \delta'$. Then $\bar{\omega} $ factors through $D/X_2^{3e}D\otimes \rho$. Since $({\bar \Phi}[W,\Delta]/(g-X_2^{3e}\Delta))_{\partial g/\partial W_1}$ is smooth over $\bar \Phi$, the corresponding localization $C$ of $(\Phi[X_2,W,\Delta]/(g-X_2^{3e}\Delta))_{\partial g/\partial W_1}$ is such that $C/\Gamma C$ is regular and $p$ is from a regular local system of parameters of $C/\Gamma C$. It follows that $D/\Gamma D$ is  regular local, which shows our claim.

Using Proposition
\ref{p2} for ${\tilde \Phi}[X_2]\to {\tilde A}$, $X_2\to x_2$,  ${\tilde \Phi}[X_2]\otimes_{\tilde \Phi}{\tilde G}_1$, ${\tilde D}=D/d^3D$ and ${\tilde {\Phi}}\otimes_{\Phi}\rho$ we see that ${\tilde w}_1$ factors through an essentially smooth $\tilde D$-algebra  ${\tilde D}'$. Then we may find an essentially smooth 
$D$-algebra $D'$ such that ${\tilde {\Phi}}\otimes_{\Phi}D'\cong {\tilde D}'$. 
Clearly, $D'/\Gamma D'$ is regular because $D/\Gamma D$ is. The map ${\tilde D}'\to {\tilde A} $ lifts by the Implicit Function Theorem to a $D$-morphism $\rho' :D'\to A$ such that $\tilde{\Phi}\otimes_{\Phi} v$ factors through
$\rho'$ modulo $d^3$. Applying again Proposition \ref{p2} for ${\Phi}\to A$, $E,D',\rho'$ we see that $v$ factors through an essentially smooth $D'$-algebra $D''$, such that $D''/\Gamma D''$  is regular since  $D'/\Gamma D'$  is.

\vskip 0.5 cm
{\bf Case $n>2$}

As above we may suppose that  ${\tilde w}_1'(H_{{\tilde G}_1/{\tilde{\Phi} }}){\tilde A}\subset \mm{\tilde A}$.
We will show that given a prime ideal $q\in \Spec A$ such that $q{\tilde A}$ is a minimal prime  ideal of  ${\tilde w}_1'(H_{{\tilde G}_1/{\tilde{\Phi} }})$, the map ${\tilde w}_1'$ factors through a  finite type $\tilde{\Phi}$-algebra of the form ${\tilde P}/(p-b)$, $b\in {\tilde P}$, let us say  ${\tilde w}_1'$ is the composite map 
 ${\tilde G}_1 \to {\tilde P}/(p-b)\xrightarrow{{\tilde\mu}} {\tilde A}'$, where ${\tilde A}'$ is a factor of $\tilde A$, ${\tilde \mu}$ is induced by a map ${\tilde \mu}':{\tilde P}\to {\tilde A}' $ with
${\tilde w}_1'(H_{{\tilde G}_1/{\tilde{\Phi} }}){\tilde A}'\subset \sqrt{{\tilde \mu}(H_{{\tilde P}/{\tilde{\Phi} }}){\tilde A}'}\not \subset q{\tilde A}'$.

For the beginning we assume that height $q=2$. Then $qA_q=(z_1,z_2,\gamma) A_q$ for some $z_1,z_2\in q$ because $R_q$ is a regular local ring of dimension $2$. Assume that $(p,z_2,\gamma)A_q$ is a $qA_q$-primary ideal. Let $\Phi_2 =\Phi[Z_2]_{(p,Z_2,\Gamma)}$ and
  $\Lambda_2$ be the (unique) Cohen $\Phi_2$-algebra  with the residue field  Fr$(A/q)$, that is  $\Lambda_2$ is complete  and the map $\Phi_2\to \Lambda_2$ is flat (even regular). Then the completion ${\hat R}_q$ of the regular local ring  $R_q$ is a finite free module over
$\Lambda_2 /(\Gamma)$. We have $\lambda_2z_2^{e_2}\in {\tilde w}_1'(H_{{\tilde G}_1/{\tilde{\Phi} }}){\tilde A}$ for some $\lambda_2\in A\setminus q$ and $e_2\in {\bf N}$. Changing $z_2$ by $\lambda_2 z_2$ we may suppose that $\lambda_2=1$. Set ${\tilde\Phi}_2=\Phi_2/(d^3)$. Changing 
${\tilde G}_1$ by ${\tilde \Phi}_2\otimes_{{\tilde\Phi}}{\tilde G}_1$ and ${\tilde w}_1' $ canonically we may suppose that ${\tilde G}_1$ is a ${\tilde\Phi}_2$-algebra.

Similar to Case $n=2$  we may assume that ${\tilde G}_1\cong {\tilde{\Phi}_2}[Y']/{\tilde I}$
and that $z_2^{e_2}\equiv \sum_jM_jL_j$  modulo ${\tilde I}$ for a system of polynomials $f^{(2)}$ from ${\tilde I}$, some polynomials $L_j\in ((f^{(2)}): {\tilde I})$ and some minors
$M_j$ of $(\partial f^{(2)}/\partial Y')$. Set ${\tilde \Lambda_2}=\Lambda_2/(d^3)$. Note that  ${\tilde A}_q/(z_2^{3e_2},\gamma)\cong  \hat{A_q}/(d^3,z_2^{3e_2},\gamma)$ is finite free over 
$\tilde{\Lambda}_2/(Z_2^{3e_2},\Gamma)$ with the basis $\{z_1^iz_2^j: (i,j)\in N_2\} $ for some $N_2\subset [0,3e_1]\times [0,3e_2]$ and $e_1\in {\bf N}$ (we may suppose $p\not \in (z_1^{e_1},z_2^{e_2})$ increasing $e_1$ and $e_2$ changing $L_i$ if necessary). It follows that ${\tilde A}_q/(z_2^{3e_2})$  is finite free over 
$\tilde{\Lambda}_2/(Z_2^{3e_2})$ with the same basis.
We have 
$$p\equiv \sum_{(i,j)\in N_2' } t_{20ij}z_1^iz_2^j+\sum_{\lambda\in [m], (i,j)\in N_2''} t_{2\lambda ij}z_1^iz_2^j \gamma_{\lambda}$$
 modulo $(d^3,z_2^{3e_2})$ for some  $t_{20ij}\in A\setminus q$,  $t_{2\lambda ij}\in A$ and some subsets $N_2',N_2''\subset [0,3e_1]\times [0,3e_2]$. 
$\Lambda_2$ is a filtered inductive limit of  smooth ${\Phi}_2$-algebras $C_2$ (see Theorem \ref{gnd}) and let  ${\tilde \psi}_2:{\tilde C}_2/(Z_2^{3e_2})\to {\tilde \Lambda}_2/(Z_2^{3e_2})$, ${\tilde C}_2:=C_2/(d^3)$ be the limit map. We may choose  ${\tilde C}_2$ such that the image of the  map  
$${\tilde \psi}_2':{\tilde C}_2[Z_1]/(Z_1^{3e_1},Z_2^{3e_2})\to {\tilde A}_q/(z_1^{3e_1},z_2^{3e_2}),$$
 $Z_1\to z_1$
 contains $( t_{2\lambda ij})$, let us say $t_{2\lambda ij}={\tilde \psi}_2'(t_{2\lambda ij}')$ for some $t'_{2\lambda ij}\in {\tilde C}_2[Z_1]$, $0\leq \lambda\leq m$. Then 
 $$t_2':=\sum_{(i,j)\in N_2'}t_{20 ij}'Z_1^iZ_2^j+\sum_{(i,j)\in N_2'',  \lambda\in [m]}t_{2\lambda ij}'Z_1^iZ_2^j\Gamma_{\lambda}$$
   is mapped in $p$ modulo  $(d^3,z_2^{3se_2})$ by ${\tilde{\psi}_2'}$.

 Note that ${\tilde C}_2$ is  a localization of a finite type $\tilde{\Phi}_2$-algebra ${\tilde C}_2'$ with ${\tilde \psi}_2'({\tilde C}_2')\subset {\tilde A}/(z_2^{e_2})$ and ${\tilde \psi}_2'(H_{{\tilde C}_2'/{\tilde{\Phi}_2}})\not\subset q{\tilde A}/(z_2^{3e_2})$. 
 Usually we have $t_{2\lambda ij}'\not \in {\tilde C}_2'[Z_1]$ but there exist $c, t''_{2\lambda ij}\in {\tilde C}_2'[Z_1]$ such that ${\tilde \psi}_2'(c)\in {\tilde A}/(z_1^{3e_1},z_2^{3e_2})\setminus q{\tilde A}/(z_1^{3se_1},z_2^{3e_2})$ and $ct'_{2\lambda ij}=t''_{2\lambda ij}$. 
 Since ${\tilde A}_q/(z_1^{3e_1},z_2^{3e_2})$ is a filtered inductive limit of $\Phi_2$-algebras of type ${\tilde B}_2={\tilde C}_2[Z_1]/(Z_1^{3e_1},Z_2^{3e_2},p-t'_2)$ we see that the composite map ${\tilde G}_1\xrightarrow{{\tilde w}'_1} {\tilde A}\to  {\tilde A}_q/(z_1^{3e_1},z_2^{3e_2})$ factors to a such  $\Phi_2$-algebra  ${\tilde B}_2$. 
 
  Set ${\tilde P}_2:={\tilde C}_2'[Z_1,T']/(cT'_{(2\lambda ij}-t''_{20\lambda ij})_{\lambda i j},Z_1^{3e_1},Z_2^{3e_2})$ for some new variables  
 $T'=(T_{2\lambda ij}')_{\lambda i j}$. Let ${\tilde \mu}_2':{\tilde P}_2\to {\tilde A}/(z_1^{3e_1},z_2^{3e_2})$ given by $T'\to ( t_{2\lambda ij})$. Note that ${\tilde \mu}_2'(H_{{\tilde P}_2/{\tilde \Phi}_2})\not \subset q{\tilde A}/(z_1^{3e_1},z_2^{3e_2})$. 
 We may assume that the composite map ${\tilde G}_1\xrightarrow{{\tilde w}'_1} {\tilde A}\to  {\tilde A}/(z_1^{3e_1},z_2^{3e_2})$ factors to a   $\Phi_2$-algebra 
  $${\tilde B}_2'={\tilde P}_2/(p-\sum_{(i,j)\in N_2'}T_{20 ij}'Z_1^iZ_2^j+\sum_{(i,j)\in N_2'', 1\leq \lambda\leq m}T_{2\lambda ij}'Z_1^iZ_2^j\Gamma_{\lambda}).$$

Step by step, using this procedure we may find some elements $(z_i)_{2\leq i\leq \nu}$, some numbers $(e_i)_i$, some finite type $\tilde{\Phi}_i={\tilde\Phi}_2[Z_3,\ldots,Z_i]/(Z_2^{3e_2},\ldots,Z_i^{3e_i}) $-algebras ${\tilde P}_i$,  some morphisms ${\tilde w}'_i:{\tilde P}_i\to A_i:={\tilde A}/(z_3^{3e_2},\ldots,z_i^{3e_i})$ and some elements $b_i\in {\tilde P}_i$, $ 3\leq i\leq \nu$ such that for all  $ 3\leq i\leq \nu$
\begin{enumerate}
\item the composite map ${ \tilde P}_{i-1}\xrightarrow{{\tilde w}'_{i-1}} A_{i-1}\to A_i$ factors by ${\tilde P}_i/(p-b_i)$ namely ${\tilde w}'_{i-1}$ is the composite 
map\\
 ${\tilde P}_{i-1}\to {\tilde P}_i/(p-b_i)\xrightarrow{{\tilde w}_i} A_i$, 
where ${\tilde w}_i$ is induced by ${\tilde w}'_i$,

\item there exists a strict inclusion $\sqrt{{\tilde w}'_{i-1}(H_{{\tilde P}_{i-1}/{\tilde \Phi}_{i-1}})A_i}\subset \sqrt{{\tilde w}'_i(H_{{\tilde P}_i/{\tilde {\Phi}_i}}A_i)}$,
\item $(0:_{A_{i-1}}z_i^{e_i})=(0:_{A_{i-1}}z_i^{e_i+1})$ and $z_i^{e_i}\in {\tilde w}'_{i-1}(H_{{\tilde P}_{i-1}/{\tilde {\Phi}}_{i-1}})A_i$.
\end{enumerate}

By Noetherian induction we may assume that ${\tilde w}'_{\nu}(H_{{\tilde P}_{\nu}/{\tilde {\Phi}}_{\nu}}){\tilde A}_{\nu}$
is $\mm {\tilde A}$-primary. We may choose  $x_n$ such that it is regular in $(R_i)_{i\in [\nu]}$ and  a power $x_n^{e'}$ of $x_n$ is in ${\tilde w}'_{\nu}(H_{{\tilde P}_{\nu}/{\tilde {\Phi}}_{\nu}}){\tilde A}_{\nu}$ (we can assume that
$x_n^{e'}\in \Delta_f((f):J)$ for some $f, J$ defining $P_{\nu}$ as usual). 

Induct on $n>1$. Using the induction hypothesis,
as in Case $n=2$,  $A/(x_n^{3e'})$ is a filtered inductive limit of some Noetherian local ${\tilde \Phi}'={\tilde \Phi}[X_n]/(X_n^{3e'})$-algebras of type $G/(p-b)$, where $(G,\aa)$ is essentially smooth over ${\tilde \Phi}'$, $p$ is from a system of regular parameters of $G/(\Gamma,X_n)$
and $b\in \aa^2$. Let ${\bar \omega}:G/(p-b)\to A/(x_n^{3e'})$ be the limit map. Then we find as in Case $n=2$ a Noetherian local $\Phi$-algebra $D$ such that
$D/(X_n^{3e'})\cong /(p-b)$,  $D/(\Gamma)$ is regular and the composite map  ${\tilde P}_{\nu}\xrightarrow{{\tilde w}'_{\nu}} A_{\nu}\to A_{\nu}/(x_n^{3e'})$ factors through 
${\tilde \Phi}_{\nu}\otimes_{\tilde \Phi}{\bar \omega}$. As in Case $n=2$ using again Proposition \ref{p2} we find a Noetherian $\Phi$-algebra $D'$ and a map $\rho':D'\to A$ such that $D'/(\Gamma)D'$ is regular and ${\tilde w}'_{\nu}$ factors through ${\tilde \Phi}_{\nu}\otimes_{\Phi} D'$. 

In particular, the composite map ${\tilde P}_{\nu-1}\xrightarrow{{\tilde w}'_{\nu-1}} A_{\nu-1}\to A_{\nu}$ factors through ${\tilde  \Phi}_{\nu}\otimes_{\phi} D'$ and the procedure continues. By recurrence  using Proposition \ref{p2} and Remark \ref{r} we find a Noetherian local $\Phi$-algebra $D''$ essentially smooth over $D$ and a map $\rho'':D''\to A$ such that $D''/\Gamma D''$ is regular and ${\tilde w}_1'$ factors through ${\tilde \Phi}\otimes_{\Phi} D''$. Finally the same procedure shows that $v$ factors trough a $\Phi$-algebra $F$ essentially smooth over $D$, with $F/\Gamma F$ regular.

\hfill\ \end{proof}

\begin{Remark} \label{r1}{\em To show the BQ Conjecture for a regular local ring of unequal characteristic applying Theorem \ref{s} we need to prove the following facts:

i) if the BQ Conjecture holds for $\Lambda'_{i-1}$ then it holds for $\Lambda_i$, $1\leq i\leq r$,

ii) if the BQ Conjecture holds for $\Lambda_i$ then it holds for $\Lambda'_i$, $1\leq i<r$.

The first fact could be proved using the ideas of Lindel and Swan. For the second fact we have to show the following result

iii) if the BQ Conjecture holds for a regular local ring $A$ and $b\in A$ belongs to a regular system of parameters of $A$  then it holds for  a localization  of $A[W]/(W^e-b)$, $e\in {\bf N}$ (in fact for  $A[W]/(W^e-b)$ using \cite{R}).

This might be wrong as an example from \cite{La} suggests. If $A={\bf R}[X_1]_{(X_1)}$, $B=(A[X_2]/ (X_2^3-X_1^2)))_{(X_1,X_2)}$ then there exist projective modules over $B[T]$ of rank one, which are not free. On the other hand, Vorst shows in \cite{V} that if the BQ Conjecture holds for a regular local ring $A$ then it holds for a factor of a polynomial $A$-algebra by a monomial ideal.}
\end{Remark} 

\begin{Corollary}\label{c}
All regular local rings are filtered inductive limits of excellent regular local rings.
\end{Corollary}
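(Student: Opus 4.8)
The plan is to deduce the corollary at once from the now complete positive answer to Swan's question. \emph{First}, I would check that every regular local ring $(R,\mm,k)$ is covered by the available results: if $R$ contains a field, this is case (2) of \cite[Theorem 3.1]{P3}; if $R$ has unequal characteristic with $p=\mathrm{char}\,k\notin\mm^2$, it is case (1) of that same theorem; and if $R$ has unequal characteristic with $0\neq p\in\mm^2$, it is precisely Theorem \ref{s} applied with $m=0$, so that there are no nilpotents $\gamma$, $A=R$, and $\Phi={\bf Z}_{(p)}$. In each of these three mutually exhaustive cases the conclusion is that $R$ is a filtered inductive limit of regular local rings essentially of finite type over ${\bf Z}$.

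\emph{Second}, I would invoke the standard fact that a ring essentially of finite type over ${\bf Z}$ is excellent: ${\bf Z}$ is excellent, excellence passes to finitely generated algebras and to localizations (Grothendieck; cf.\ \cite{M}). Hence every ring occurring in the directed system produced by the first step is an excellent regular local ring, and $R$ is thereby exhibited as a filtered inductive limit of excellent regular local rings, which is exactly the assertion. Note that one does not even need the maps in the system to be injective or local; one only uses that each object of the system lies in the class of excellent regular local rings, and that this class contains all regular local rings essentially of finite type over ${\bf Z}$.

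There is essentially no genuine obstacle here; the only point that needs a word of care is the bookkeeping in the equicharacteristic case, where \cite[Theorem 3.1]{P3} is naturally phrased as a limit of regular local rings essentially of finite type over the prime field. One then uses that ${\bf F}_p$ is a quotient of ${\bf Z}$, and that ${\bf Q}$ — and, after clearing finitely many denominators, any finite type ${\bf Q}$-algebra — is essentially of finite type over ${\bf Z}$; so in all characteristics the terms of the limit are essentially of finite type over ${\bf Z}$ and hence excellent. No commutative-algebra input beyond Theorem \ref{s}, the recalled \cite[Theorem 3.1]{P3}, and the permanence properties of excellence is required.
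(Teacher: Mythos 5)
Your argument is correct and is precisely the (implicit) deduction intended in the paper: combine Theorem \ref{s} for the case $0\neq p\in\mm^2$ with cases (1) and (2) of \cite[Theorem 3.1]{P3} to write any regular local ring as a filtered inductive limit of regular local rings essentially of finite type over ${\bf Z}$, and then observe that such rings are excellent. The case split is exhaustive and the permanence properties of excellence you invoke are standard, so nothing further is needed.
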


\vskip 0.5 cm

\section{Appendix by K\k{e}stutis \v{C}esnavi\v{c}ius\protect\footnote{Laboratoire de Math\'{e}matiques d'Orsay, Univ.~Paris-Sud, CNRS, Universit\'{e} Paris-Saclay, 91405 Orsay, France; email: {\sf kestutis@math.u-psud.fr}}: A purity conjecture for flat cohomology}

The following purity conjecture seems to be a part of the folklore.

\begin{Conjecture}
For a regular local ring $(R, \mm)$ of dimension $d$ and a commutative, finite, flat $R$-group scheme $G$, the fppf cohomology with supports vanishes in low degrees as follows
$$
H^i_{\mm}(R, G) = 0 \ \  \mbox{for} \ \ i < d;
$$
equivalently, setting $U_R := \Spec(R) \setminus \{\mm\}$, the pullback map 
$$
H^i(R, G) \to H^i(U_R, G) \ \ \mbox{is an isomorphism for} \ \ i < d - 1 \ \ \mbox{and injective for} \ \ i = d - 1.
$$
\end{Conjecture}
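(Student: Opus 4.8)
\emph{Proof proposal.} The statement is a conjecture, so the realistic target is the reduction recorded as Proposition~\ref{pu}: to deduce the purity statement for an arbitrary regular local ring $(R,\mm)$ of dimension $d$ from the corresponding statement over \emph{complete} regular local rings, where (by Cohen's theorem) the completion has the explicit form ``finite flat over a power series ring over a field or over a ramified power series ring over a complete DVR'', and where the conjecture is far more approachable. The plan is a two--step dévissage: first pass from an arbitrary regular local ring to an \emph{excellent} one using Corollary~\ref{c}, then descend the vanishing from the completion $\hat R$ down to $R$ by faithfully flat descent, using the General Néron Desingularization (Theorem~\ref{gnd}) to make the relevant base change legitimate.

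\textbf{Step 1 (reduction to excellent $R$).} By Corollary~\ref{c} write $R=\varinjlim_{j\in J}R_j$ with the $R_j$ excellent regular local rings and $R_j\to R$ flat local. Once $j$ is large enough that $\mm_jR=\mm$ (possible since $\mm$ is finitely generated and $R=\varinjlim_jR_j$), one has $\dim R_j=\dim R=d$, and then for every $j'\ge j$ likewise $\dim R_{j'}=d$, the fibre of $\Spec R_{j'}\to\Spec R_j$ over $\mm_j$ being the single point $\mm_{j'}$; consequently $U_R$ is the preimage of $U_{R_{j'}}$ under $\Spec R\to\Spec R_{j'}$, so $\Spec R=\varprojlim_{j'\ge j}\Spec R_{j'}$ and $U_R=\varprojlim_{j'\ge j}U_{R_{j'}}$ with affine transition maps. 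Since $G$ is finite flat, hence of finite presentation, it descends (after enlarging $j$) to a finite flat $R_j$--group scheme $G_j$; set $G_{j'}:=G_j\times_{R_j}R_{j'}$. Continuity of fppf cohomology along these cofiltered limits of qcqs schemes — valid here because the coefficients $G_{j'}$ are finite flat, hence become locally constant after one fppf cover — gives $H^i(R,G)=\varinjlim_{j'}H^i(R_{j'},G_{j'})$ and $H^i(U_R,G)=\varinjlim_{j'}H^i(U_{R_{j'}},G_{j'})$; feeding the excision long exact sequences of the pairs $(\Spec R_{j'},U_{R_{j'}})$ into the colimit yields $H^i_{\mm}(R,G)=\varinjlim_{j'}H^i_{\mm_{j'}}(R_{j'},G_{j'})$. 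Thus purity for the excellent rings $R_{j'}$ (all of dimension $d$) implies purity for $R$.

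\textbf{Step 2 (descent along $R\to\hat R$ for excellent $R$).} Now $R$ is excellent, so $R\to\hat R$ is faithfully flat and \emph{regular}, and $\hat R$ is again regular local of dimension $d$; put $G_{\hat R}:=G\times_R\hat R$, still finite flat. By Theorem~\ref{gnd} (together with the characterization at the end of the excerpt) the regular $R$--algebra $\hat R$ is a filtered colimit of smooth $R$--algebras $R\to C_\alpha\to\hat R$, which after localizing we may take essentially smooth local. Applying base change for fppf cohomology with supports along the (essentially) smooth maps $R\to C_\alpha$ and then passing to the colimit over $\alpha$ as in Step~1, one obtains $H^i_{\hat{\mm}}(\hat R,G_{\hat R})\cong H^i_{\mm}(R,G)\otimes_R\hat R$. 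Since $\hat R$ is faithfully flat over $R$, any $R$--module killed by $-\otimes_R\hat R$ vanishes, so $H^i_{\hat{\mm}}(\hat R,G_{\hat R})=0$ forces $H^i_{\mm}(R,G)=0$. Combining Steps~1 and~2, the conjecture for all regular local rings is reduced to its validity over complete regular local rings.

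\textbf{Main obstacle.} Everything hinges on two base--change/continuity inputs. The one in Step~1 — that $U_R=\varprojlim_{j'}U_{R_{j'}}$ as schemes and that fppf cohomology is continuous along the resulting cofiltered limit despite the open immersions $U_{R_{j'}}\hookrightarrow\Spec R_{j'}$ being neither proper nor of finite type — is routine once the dimension stabilization above is in place and the finiteness of $G$ is used. The genuinely delicate point is the base change $H^i_{\hat{\mm}}(\hat R,G_{\hat R})\cong H^i_{\mm}(R,G)\otimes_R\hat R$ of Step~2: for $G$ of order invertible in $k$ this is classical smooth base change, but when $G$ has $p$--power order with $p=\chara k$ one is in the fppf regime, where the base--change formalism of torsion étale cohomology is not automatic; here one must exploit that $G$ is finite flat (so all sheaves occurring are represented by finite, hence affine, schemes over the base) together with the full strength of Popescu's theorem to reduce the assertion to the smooth finite--type case, where it can be checked directly, and then descend. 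Carrying this step through is the technical heart of Proposition~\ref{pu}; granting it, the reduction to the complete case — and, via the (partial and conjectural) knowledge available there, the purity statement itself — follows.
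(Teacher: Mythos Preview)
Your Step~1 is essentially the paper's first reduction and is fine. The genuine gap is in Step~2. The isomorphism
\[
H^i_{\hat{\mm}}(\hat R,G_{\hat R})\;\cong\;H^i_{\mm}(R,G)\otimes_R\hat R
\]
is not a theorem; there is no smooth base change statement of this shape for fppf cohomology with supports in a closed point, and writing $\hat R$ as a filtered colimit of smooth $R$-algebras does not by itself produce one. Even in the \'etale case, smooth base change concerns pullback of sheaves along smooth morphisms and does not yield a tensor-product formula of this kind. You flag this as ``the genuinely delicate point'' but then simply assume it; without it the argument does not close.

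The paper avoids this issue entirely, and its mechanism is different from yours. After reducing to excellent $R$ it first passes, via excision, to the \emph{Henselization} of $R$; excision identifies $H^i_{\mm}(R,G)$ with $H^i_{\mm^h}(R^h,G)$, so one may assume $R$ is Henselian with no loss. Only then does one invoke Theorem~\ref{gnd} to write $\hat R=\varinjlim R_j$ with each $R_j$ essentially smooth local over $R$. Because $R$ is Henselian and shares its residue field with $\hat R$ (hence with each $R_j$), Hensel's lemma furnishes $R$-algebra \emph{retractions} $R_j\to R$. Functoriality then gives split injections
\[
H^i_{\mm}(R,G)\hookrightarrow H^i_{\mm_j}(R_j,G),
\]
and passing to the limit yields $H^i_{\mm}(R,G)\hookrightarrow H^i_{\hat{\mm}}(\hat R,G)$. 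No base-change isomorphism is needed: a mere injection suffices to transport the vanishing from $\hat R$ down to $R$. The step you are missing is precisely this Henselization-plus-section trick; without first Henselizing, the smooth algebras in the Popescu approximation have no reason to admit $R$-points, and your tensor-product formula has nothing to stand on.
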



As usual, we form the fppf cohomology of a scheme $S$ in the site whose objects are all the $S$-schemes and coverings are jointly surjective morphisms that are flat and locally of finite presentation.
\medskip

{\bf Examples of known cases}
\begin{enumerate}
\item
In its equivalent form that involves $U_R$, the conjecture is known for cohomological degrees $i = 0$ and $i = 1$. More precisely, the map
$$
\ \  H^0(R, G) \to H^0(U_R, G) \ \ \text{is bijective when} \ \  d > 0
$$
by a well-known lemma on sections of finite schemes over normal bases (for the lack of a better reference, see \cite[Lemma 3.1.9]{modular-description}). This lemma and the representability of $G$-torsors also show that the map
$$
\ \  H^1(R, G) \to H^1(U_R, G)
$$
is injective when $d > 0$; by a result of Moret-Bailly \cite[Lemme 2]{MB85} (whose detailed proof may be found, for instance, in \cite[\S3.1]{Mar16}), this map is bijective when $d > 1$. Of course, there is no need to assume that $G$ is commutative for these results in low degrees.

\item
The case when the order of $G$ is invertible in $R$ is also known; in fact, in this case, by the absolute cohomological purity conjecture proved by Gabber, $H^i_{\mm}(R, G) = 0$ for $i < 2d$, see \cite{F}.

\item
We have $H^i_{\mm}(R, \mathbb{G}_a) = 0$ for $i < d$ because the depth of $R$ is $d$ (see \cite[III.3.4]{SGA}), so, in the case when $R$ is an $\bf{F}_p$-algebra, the conjecture holds for $G = \alpha_p$ (where $\alpha_p$ is the kernel of the absolute Frobenius morphism of $\mathbb{G}_a$).

\item
By the purity for the Brauer group, whose proof was finished in \cite{C}, the groups $H^i_{\mm}(R, \mathbb{G}_m)$ vanish when $d \ge 2$ and $i \le 3$; thus, $H^i_{\mm}(R, \mu_p) = 0$ when $i \leq 3$ with $i < d$ (where $\mu_p$ is the kernel of the absolute Frobenius morphism of $\mathbb{G}_m$).

\end{enumerate}

The goal of this appendix is to illustrate the main results of the paper with the following reduction.

\begin{Proposition}\label{pu}
For every regular local ring $(R, \mm)$ that is a filtered direct limit of \emph{excellent} regular local rings, the conjecture above reduces to the case when $R$ is complete.
\end{Proposition}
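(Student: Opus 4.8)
The plan is to exploit the fact that $R$ is a filtered direct limit $R = \varinjlim_j R_j$ of \emph{excellent} regular local rings together with the known validity of the purity conjecture for excellent (hence, by Popescu desingularization, approximable) rings — or, more directly, the validity for \emph{complete} regular local rings, which are the rings one arrives at after completing. The key observation is that fppf cohomology with supports, $H^i_{\mm}(R, G)$, commutes with filtered direct limits of rings when $G$ is of finite presentation: a finite flat $R$-group scheme $G$ descends to a finite flat $R_j$-group scheme $G_j$ for some index $j$, and for each $i$ one has $H^i_{\mm}(R, G) = \varinjlim_{j' \geq j} H^i_{\mm}(R_{j'}, G_{j'})$, by the standard limit formalism for fppf cohomology (\cite[IV.8.7]{SGA} style arguments, applied to cohomology with supports via the long exact sequence relating $H^i(R, -)$, $H^i(U_R, -)$ and $H^i_{\mm}(R, -)$, each term of which commutes with filtered limits). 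Thus it suffices to prove $H^i_{\mm}(R_j, G_j) = 0$ for $i < d$ for all sufficiently large $j$, i.e. to prove the conjecture for each excellent regular local ring $R_j$.

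Next I would reduce the excellent case to the complete case. For an excellent regular local ring $(S, \mm_S)$ of dimension $d$, its completion $\widehat{S}$ is a complete (hence excellent) regular local ring of the same dimension, and $S \to \widehat{S}$ is faithfully flat. The strategy is a standard flat-base-change plus faithful descent argument: one shows that $H^i_{\mm_S}(S, G) \otimes_S \widehat{S} \cong H^i_{\mm_{\widehat{S}}}(\widehat{S}, \widehat{G})$ — this uses that cohomology with supports in $\mm_S$ is computed by a complex of $S$-modules that are finitely generated in each degree in the relevant range (or, better, that the terms $H^i(U_S, G)$ are the cohomology of a Noetherian scheme and one has a flat-base-change isomorphism $H^i(U_S, G) \otimes_S \widehat{S} \cong H^i(U_{\widehat{S}}, \widehat{G})$ when $G$ is finite) — and then, since $\widehat{S}$ is faithfully flat over $S$, vanishing of $H^i_{\mm_{\widehat{S}}}(\widehat{S}, \widehat{G})$ forces vanishing of $H^i_{\mm_S}(S, G)$. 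Concretely one uses the long exact sequence $\cdots \to H^i_{\mm}(R,G) \to H^i(R,G) \to H^i(U_R, G) \to \cdots$ and the comparison of the middle two terms under completion; the equivalent ``isomorphism for $i < d-1$, injective for $i = d-1$'' formulation is the convenient one to propagate.

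The main obstacle I expect is the base-change/limit compatibility for fppf cohomology with \emph{finite flat non-smooth} group schemes $G$ (e.g. $\mu_p$, $\alpha_p$) over non-Noetherian or mixed-characteristic bases: the naive limit formalism of SGA4 is phrased for torsion sheaves on Noetherian schemes and one must be a little careful that (i) $G$ really does descend to some $G_j$ as a finite flat group scheme, not merely as a flat affine scheme, and that (ii) the comparison maps $\varinjlim_{j} H^i(U_{R_j}, G_j) \to H^i(U_R, G)$ are isomorphisms — this is where the hypothesis that the $R_j$ are \emph{excellent} (so that $U_{R_j}$ is a nice Noetherian scheme and Popescu-type approximation applies on the nose) is used, and it is exactly the point the appendix is advertising as an application of Corollary \ref{c}. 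Once these limit and completion compatibilities are in hand, the reduction itself is formal: any instance of the conjecture over $R$ is a limit of instances over excellent $R_j$, each of which is equivalent to its completed instance, which is the complete case. I would organize the write-up as: (1) $G$ descends and cohomology with supports commutes with the filtered limit; (2) flat base change along $R_j \to \widehat{R_j}$ plus faithful descent reduces excellent to complete; (3) assemble.
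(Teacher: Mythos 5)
Your first step (the limit formalism reducing to the excellent case) is essentially the paper's, though you should note one point the paper handles explicitly: the bound $i<d$ in the conjecture depends on $d=\dim R$, so before passing to the limit one must descend a regular system of parameters (a basis of $\mm/\mm^2$) to a finite level to guarantee $\dim R_j\geq \dim R$ for the relevant indices $j$.

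The genuine gap is in your second step, the passage from excellent to complete. There is no flat base change isomorphism $H^i(U_S,G)\otimes_S\widehat{S}\cong H^i(U_{\widehat{S}},\widehat{G})$ for fppf cohomology of a finite flat, non-smooth group scheme, and $H^i_{\mm_S}(S,G)$ is not computed by a complex of finitely generated $S$-modules: groups such as $H^1(U_S,\mu_p)$ or $H^1(U_S,\alpha_p)$ are in general enormous and carry no useful $S$-module finiteness, so the ``tensor up and descend faithfully flatly'' strategy has nothing to grip. The comparison between $S$ and $\widehat{S}$ is exactly the hard point, and the paper's route around it is different in kind: first use excision (\cite[proof of III.1.27]{Mi} together with \cite[18.7.6]{EGA}) to reduce to $R$ excellent \emph{and Henselian}; then observe that excellence makes $R\to\widehat{R}$ a regular morphism, so N\'eron--Popescu desingularization (Theorem \ref{gnd}) writes $\widehat{R}$ as a filtered colimit of essentially smooth local $R$-algebras $R_j$; finally, Henselianity of $R$ (plus the fact that $R$ and $\widehat{R}$ share a residue field) produces sections $R_j\to R$ of the structure maps, which give retractions on cohomology and hence an \emph{injection} $H^i_{\mm}(R,G)\hookrightarrow H^i_{\widehat{\mm}}(\widehat{R},G)$. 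Vanishing in the complete case then forces vanishing for $R$. Note also that this shows excellence is used to make $R\to\widehat{R}$ regular (so that desingularization applies), not, as you suggest, to control $U_{R_j}$ in the limit step. Without the Henselization step and the section trick, your reduction from excellent to complete does not go through.
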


\begin{proof}
By descending the elements that comprise an $(R/\mm)$-basis of $\mm/\mm^2$ to a finite level in the direct limit, we may assume that $(R, \mm)$ is a filtered direct limit of excellent regular local rings each of whose dimension is at least that of $R$. Thus, the limit formalism reduces us to the case when $R$ is excellent. Excision (see \cite[proof of III.1.27]{Mi}) 
and \cite[18.7.6]{EGA} then reduce us further to the case when $R$ is both excellent and Henselian.  We then use the N\'{e}ron--Popescu desingularization (see Theorem \ref{gnd}) to express the $\mm$-adic completion $(\widehat{R}, \widehat{\mm})$ of $(R, \mm)$ as a filtered direct limit of essentially smooth, local $R$-algebras $(R_j, \mm_j)$. Since $R$ is Henselian local and shares the residue field with $\widehat{R}$, Hensel's lemma \cite[18.5.17]{EGA} ensures that the structure maps $R \to R_j$ have sections $R_j \to R$. In particular, they induce the injections
$$
H^i_{\mm}(R, G) \hookrightarrow H^i_{\mm_j}(R_j, G), \ \ \mbox{which give the injection} \ \ H^i_\mm(R, G) \hookrightarrow H^i_{\widehat{\mm}}(\widehat{R}, G).
$$
This achieves the promised reduction to the case when $R$ is complete.
\end{proof}

\begin{Remark} \label{r2} {\em By Corollary \ref{c}, we see that the above Conjecture is reduced to the complete case.}
\end{Remark}

{\bf Acknowledgements:}

We owe thanks to K\k{e}stutis \v{C}esnavi\v{c}ius for some helpful remarks on Lemmas 5, 9, Proposition 11, Remark 15 and especially for the appendix and  to M. V\^aj\^aitu and A. Zaharescu who hinted us Remark \ref{r0}. Also we owe thanks to Moret-Bailly for some useful remarks on the appendix and to the Referee for many corrections.
This work has been partially elaborated in the frame of the International
Research Network ECO-Math.

\end{document}